\newcommand{\paperfont}{\fontsize{12pt}{1.3\baselineskip}\selectfont}
\begin{document}


\theoremstyle{definition}
\makeatletter
\thm@headfont{\bf}
\makeatother
\newtheorem{theorem}{Theorem}[section]
\newtheorem{definition}[theorem]{Definition}
\newtheorem{lemma}[theorem]{Lemma}
\newtheorem{proposition}[theorem]{Proposition}
\newtheorem{corollary}[theorem]{Corollary}
\newtheorem{remark}[theorem]{Remark}
\newtheorem{example}[theorem]{Example}
\newtheorem{assumption}[theorem]{Assumption}
\numberwithin{equation}{section}

\lhead{}
\rhead{}
\lfoot{}
\rfoot{}

\renewcommand{\refname}{References}
\renewcommand{\figurename}{Figure}
\renewcommand{\tablename}{Table}
\renewcommand{\proofname}{Proof}

\newcommand{\dnumiag}{\mathrm{diag}}
\newcommand{\tr}{\mathrm{tr}}
\newcommand{\dnum}{\mathrm{d}}

\newcommand{\Enum}{\mathbb{E}}
\newcommand{\Pnum}{\mathbb{P}}
\newcommand{\Rnum}{\mathbb{R}}
\newcommand{\Cnum}{\mathbb{C}}
\newcommand{\Znum}{\mathbb{Z}}
\newcommand{\Nnum}{\mathbb{N}}
\newcommand{\abs}[1]{\left\vert#1\right\vert}
\newcommand{\set}[1]{\left\{#1\right\}}
\newcommand{\norm}[1]{\left\Vert#1\right\Vert}
\newcommand{\innp}[1]{\langle {#1}]}
\newcommand{\lintg}{\lfloor g(\tau)\rfloor}
\newcommand{\rintg}{\lceil g(\tau)\rceil}
\newcommand{\lint}[1]{\left\lfloor#1\right\rfloor}
\newcommand{\rint}[1]{\left\lceil#1\right\rceil}
\newcommand{\F}{\mathscr{F}}

\title{\textbf{Sharp moderate maximal inequalities for upward skip-free Markov chains}}
\author{Chen Jia$^{1}$\\
\footnotesize $^1$Department of Mathematical Sciences, University of Texas at Dallas, Richardson, TX 75080, U.S.A.\\
\footnotesize E-mail: jiac@utdallas.edu\\}
\date{}                              
\maketitle                           
\thispagestyle{empty}                

\paperfont

\begin{abstract}
The $L^p$ maximal inequalities for martingales are one of the classical results in probability theory. Here we establish the sharp moderate maximal inequalities for upward skip-free Markov chains, which include the $L^p$ maximal inequalities as special cases. Furthermore, we apply our theory to two specific examples and obtain their moderate maximal inequalities: the first one is the M/M/1 queue and the second one is an upward skip-free Markov chain with large death jumps. These two examples have the same total birth and death rates. However, the former exhibits a phase transition phenomenon while the latter does not.\\

\noindent 
\textbf{Keywords}: birth-death process, single-birth process, M/M/1 queue, moderate function, Lenglart domination principle, good $\lambda$ inequality \\

\noindent
\textbf{AMS Subject Classifications}: 60J27, 60E15, 60K25, 60J80
\end{abstract}

\section{Introduction}
A continuous-time Markov chain on the nonnegative integers is called upward skip-free if its generator matrix $Q = (q_{ij})$ satisfies $q_{ij} = 0$ for any $j\geq i+2$. Such kind of Markov chains only allows nearest-neighbor birth jumps but allows large death jumps. They can be viewed as a natural generalization of the classical birth-death processes \cite{karlin1975first}. In the literature, upward skip-free Markov chains are also known as skip-free Markov chains to the right or single-birth processes. Since upward skip-free Markov chains allow large death jumps, they can be used to describe the evolution of the size of a population that undergoes catastrophes \cite{brockwell1982birth, brockwell1986extinction}.

In recent years, upward skip-free Markov chains have drawn increasing attention and there has been a great number of results on various of their properties such as uniqueness, recurrence, ergodicity, exponential ergodicity, strong ergodicity, spectral theory, first passage time, first hitting time, and quasi-limiting distribution \cite{abate1989spectral, kijima1993quasi, chen1999single, zhang2001strong, mao2004exponential, fill2009hitting, chen2014unified}. Another important question that has not been answered yet is how far an upward skip-free Markov chain can travel (how large the population size can become) before a given deterministic or random time. This problem is closely related to the topic of maximal inequalities in probability theory.

In fact, the $L^p$ maximal inequalities for martingales are one of the classical results in probability theory. Let $M = \set{M_t:t\geq 0}$ be a continuous local martingale with $M_0 = 0$. The Burkholder-Davis-Gundy inequality \cite[Page 160]{revuz1999continuous} claims that for any $p>0$, there exist two positive constants $c_p$ and $C_p$ such that for any stopping time $\tau$ of $M$,
\begin{equation*}
c_p\Enum[M]^{p/2}_\tau \leq \Enum[\sup_{0\leq t\leq\tau}|M_t|^p] \leq C_p\Enum[M]^{p/2}_\tau,
\end{equation*}
where $[M] = \set{[M]_t:t\geq 0}$ is the quadratic variation process of $M$. Over the past two decades, significant progress has been made in the maximal inequalities for diffusion processes \cite{graversen1998maximal, graversen1998optimal, graversen2000maximal, peskir2001maximal, peskir2001bounding, yan2004ratio, yan2005lp, yan2005lpestimates, botnikov2006davis, lyulko2014sharp}. In particular, Peskir \cite{graversen2000maximal} has proved the $L^1$ maximal inequalities for a large class of diffusion processes by using the Lenglart domination principle and obtained satisfactory results.

In this paper, we study the moderate maximal inequalities for upward skip-free Markov chains, which include the $L^p$ maximal inequalities as special cases. Our theory is based on two steps. The first step is to establish the $L^p$ maximal inequalities for some $p>0$ by using the discrete version of the Lenglart domination principle, while the second step is to establish the moderate maximal inequalities for any moderate function $F$ by using the discrete version of the good $\lambda$ inequality. Furthermore, we give two examples to illustrate the main results of this paper and discuss the related phase transition phenomenon.

The content of this paper is organized as follows. In Section 2, we give the $L^p$ maximal inequalities for some particular $p>0$ under the so-called Peskir condition. In Section 3, we give the moderate maximal inequalities for general moderate functions under stronger assumptions. In Sections 4 and 5, we apply our abstract theorems to two specific examples: the M/M/1 queue and an upward skip-free Markov chain with large death jumps. Although the total birth and death rates of these two processes are exactly the same, the former exhibits a phase transition phenomenon, while the latter does not due to the large death jumps. The detailed proofs of the main results are given in Sections 6 and 7.

\section{$L^p$ maximal inequalities}
Let $X = \set{X_t:t\geq 0}$ be a homogenous, conservative, and non-explosive continuous-time Markov chain on the nonnegative integers $\Znum^+ = \set{0,1,2,\cdots}$ with generator matrix $Q = (q_{ij})$. Recall that $X$ is called upward skip-free if $q_{ij} = 0$ for any $j\geq i+2$. In the literature, upward skip-free Markov chains are also called skip-free Markov chains to the right or single-birth processes. For simplicity, the total birth and death rates of $X$ are denoted by
\begin{equation*}
\lambda_n = q_{n,n+1},\;\;\;\mu_n = \sum_{j=0}^{n-1}q_{nj},\;\;\;n\geq 0,
\end{equation*}
respectively. It is obvious that the classical birth-death processes \cite{karlin1975first} are special examples of upward skip-free Markov chains. Throughout this paper, we assume that $X_0 = 0$ and $\lambda_n>0$ for any $n\geq 0$.

To proceed, we define two functions $m$ and $f$ on $\Znum^+$ by
\begin{equation*}
m_n = \sum_{i=0}^n\frac{F_{in}}{\lambda_i},\;\;\;f_n = \sum_{k=0}^{n-1}m_k,
\end{equation*}
where $F_{in}$ with $n>i\geq 0$ are defined recursively by
\begin{equation*}
F_{ii} = 1,\;\;\;F_{in} = \frac{1}{\lambda_n}\sum_{k=i}^{n-1}F_{ik}\sum_{j=0}^kq_{nj}.
\end{equation*}
It is easy to see that
\begin{equation*}
f_{n+1}-f_n = m_n \geq \frac{1}{\lambda_n} > 0.
\end{equation*}
Moreover, it has been proved \cite{chen2014unified} that $X$ is nonexplosive if and only if
\begin{equation*}
f_\infty := \lim_{n\rightarrow\infty}f_n = \sum_{n=0}^\infty m_n = \infty.
\end{equation*}
Thus $f$ is a strictly increasing function on $\Znum^+$ with $f_0 = 0$ and $f_\infty = \infty$. For convenience, we extend $f$ to be a strictly increasing continuous function on $\Rnum^+ = [0,\infty)$ and let $g$ be the inverse function of $f$. It is easy to see that $g$ is also a strictly increasing continuous function on $\Rnum^+$ with $g(0) = 0$ and $g(\infty) = \infty$.

Let $X^* = \set{X^*_t:t\geq 0}$ be the maximum process of $X$ defined by
\begin{equation*}
X^*_t=\sup_{0\leq s\leq t}X_s.
\end{equation*}
The following theorem, whose proof can be found in Section \ref{proof1}, gives the $L^p$ maximal inequalities for upward skip-free Markov chains. The condition \eqref{peskir} in the following theorem is the discrete analogue of that proposed by Peskir in the case of diffusion processes \cite{peskir2001bounding}.
\begin{theorem}\label{thm1}
Assume that the following Peskir condition holds for some $p>0$:
\begin{equation}\label{peskir}
\sup_{n\geq 1}\frac{f_n}{n^p}\sum_{k=n+1}^\infty\frac{k^p-(k-1)^p}{f_k}<\infty.
\end{equation}
Then there exist two positive constants $c_p$ and $C_p$ such that for any stopping time $\tau$ of $X$,
\begin{equation*}
c_p\Enum\lintg^p \leq \Enum(X^*_\tau)^p \leq C_p\Enum\rintg^p.
\end{equation*}
Here $\lint{x}$ denotes the largest integer that is smaller than or equal to $x$, $\rint{x}$ denotes the smallest integer that is larger than or equal to $x$, and $\lint{\infty} = \rint{\infty} := \infty$.
\end{theorem}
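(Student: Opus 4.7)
My plan is to build everything on the martingale $M_t := f(X_t) - t$. A direct unwinding of the recursive definitions of $F_{in}$ and $m_n$ verifies $(Qf)(n) = 1$ for every $n \in \Znum^+$, so $M$ is a true, integrable martingale starting at $0$. Writing $\sigma_n := \inf\set{t \geq 0 : X_t = n}$ and using that $X$ is upward skip-free (so $\set{X^*_\tau \geq n} = \set{\sigma_n \leq \tau}$), optional sampling of $M$ at the bounded stopping times $\sigma_n \wedge \tau \wedge T$ followed by $T \to \infty$ produces the identity $\Enum[f(X_{\sigma_n \wedge \tau})] = \Enum[\sigma_n \wedge \tau]$. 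Splitting the left-hand side on $\set{\sigma_n \leq \tau}$ (where $X_{\sigma_n} = n$, so $f(X_{\sigma_n}) = f_n$) and on its complement (where $X_\tau \leq n-1$, so $f(X_\tau) \leq f_{n-1}$) produces the two-sided discrete Lenglart-type domination
$$\frac{(\Enum[\sigma_n \wedge \tau] - f_{n-1})^{+}}{m_{n-1}} \;\leq\; \Pnum(\sigma_n \leq \tau) \;\leq\; \frac{\Enum[\sigma_n \wedge \tau]}{f_n}.$$

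For the upper bound, I would expand $(X^*_\tau)^p = \sum_{n\geq 1}(n^p - (n-1)^p)\mathbf{1}_{\sigma_n \leq \tau}$, take expectations, substitute the right-hand Lenglart inequality, and use the Fubini identity $\Enum[\sigma_n \wedge \tau] = \Enum\int_0^\tau \mathbf{1}_{X^*_s < n}\,ds$ to rewrite
$$\Enum(X^*_\tau)^p \;\leq\; \Enum\int_0^\tau \sum_{n > X^*_s} \frac{n^p - (n-1)^p}{f_n}\,ds.$$
For each $(\omega,s)$ with $X^*_s(\omega) \geq 1$ the Peskir condition \eqref{peskir} bounds the inner tail sum by $c\,(X^*_s)^p/f_{X^*_s}$; the $X^*_s = 0$ piece is absorbed by the trivial bound $\sum_{n\geq 1}(n^p - (n-1)^p)/f_n \leq (1+c)/f_1$. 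A summation-by-parts argument---rearranging the weighted sum $\sum_n (n^p - (n-1)^p)\Enum[\sigma_n \wedge \tau]/f_n$ so that the increments $n^p-(n-1)^p$ are telescoped against the discrete Hardy kernel $1/f_n$---then converts this into a constant multiple of $\Enum\rintg^p$, which is precisely the statement that \eqref{peskir} is a discrete Muckenhoupt-type Hardy condition.

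The lower bound proceeds dually: substitute the left-hand Lenglart inequality into $\Enum(X^*_\tau)^p = \sum_n (n^p - (n-1)^p)\Pnum(\sigma_n\leq\tau)$, carry out the analogous Fubini and summation-by-parts manipulation, and invoke the dual Hardy inequality that \eqref{peskir}---being Muckenhoupt-type---also forces, to extract $c_p\,\Enum\lintg^p \leq \Enum(X^*_\tau)^p$.

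I expect the main obstacle to be this lower bound, where elementary optional stopping is naturally one-sided and the positive-part truncation in $(\Enum[\sigma_n \wedge \tau] - f_{n-1})^+/m_{n-1}$ forces a careful case split before summation-by-parts. Exploiting the Muckenhoupt-type symmetry of \eqref{peskir} to obtain matching constants in both directions, rather than just an upper estimate, is where the bulk of the work lies. A minor technical nuisance is the passage $T \to \infty$ needed to accommodate unbounded or infinite $\tau$, together with the conventions $\lint{\infty} = \rint{\infty} := \infty$ that handle the case $\tau = \infty$ (where both inequalities reduce to $\infty \leq \infty$ by monotone convergence).
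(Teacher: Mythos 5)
Your foundation is sound and coincides with the paper's: the martingale $f(X_t)-t$ (the paper's Lemma \ref{martingale}), the identity $\Enum f(X_{\sigma_n\wedge\tau})=\Enum[\sigma_n\wedge\tau]$, and the layer-cake expansion of $(X^*_\tau)^p$ over the first-passage events. The gap is in how you convert the resulting sums into $\Enum\rintg^p$ and $\Enum\lintg^p$. For the upper bound, after Fubini and the \emph{pointwise} application of \eqref{peskir} you are left with $\Enum\int_0^\tau (X^*_s\vee 1)^p/f(X^*_s\vee 1)\,ds$, and this quantity is \emph{not} bounded by a constant times $\Enum\rintg^p$: for the M/M/1 queue with $\alpha>1$ (where \eqref{peskir} holds for every $p>0$) and deterministic $\tau=T$, one has $X^*_s\approx\log_\alpha s$ and $f_n\asymp\alpha^n$, so the integrand is of order $(\log_\alpha s)^p/s$ and the integral grows like $(\log T)^{p+1}$, whereas $\rint{g(T)}^p\asymp(\log T)^p$. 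The pointwise use of \eqref{peskir} genuinely loses a factor, and no summation by parts recovers it, because the quantity you must control is a path functional of $(X^*_s)_{s\le\tau}$ rather than a function of $\tau$ alone. The paper's discrete Lenglart lemma (Lemma \ref{langlart}) supplies the missing idea: split $\Pnum(\sigma_n\le\tau)\le\Pnum(\sigma_n\le\tau,\ \tau<f_n)+\Pnum(\tau\ge f_n)$ and stop at the additional deterministic time $f_n$, which caps the first term by $\frac{1}{f_n}\Enum[\tau 1_{\set{\tau<f_n}}]+\Pnum(\tau\ge f_n)$; summing against $n^p-(n-1)^p$ then produces the function $\tilde h_p(\tau)$, which is bounded by $(C_p+2)\rintg^p$ with \eqref{peskir} invoked only at the single index $n=\lintg$.

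The lower bound is in worse shape. Your inequality $\Pnum(\sigma_n\le\tau)\ge(\Enum[\sigma_n\wedge\tau]-f_{n-1})^{+}/m_{n-1}$ is correct but vacuous for large $n$: the denominator $m_{n-1}=f_n-f_{n-1}$ is of smaller order than the fluctuations of $\sigma_n$ about its mean $f_n$, so the positive part is typically zero. Concretely, for the pure birth process with constant rate $\lambda$ and $\tau=f_n$ deterministic, $\sigma_n$ is a sum of $n$ i.i.d.\ exponentials, so $\Enum[\sigma_n\wedge f_n]=f_n-\Enum(\sigma_n-f_n)^{+}\approx f_n-c\sqrt{n}/\lambda<f_{n-1}$ for large $n$; your bound returns $0$ while $\Pnum(\sigma_n\le\tau)$ stays bounded away from $0$. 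There is no ``dual Hardy'' symmetry that repairs this, because the failure is probabilistic, not combinatorial. The paper avoids lower-bounding $\Pnum(\sigma_n\le\tau)$ altogether: it applies the \emph{same} Lenglart lemma with the roles reversed, taking $Z_t=t$ dominated by $A_t=f(X^*_t)$ via $\Enum\tau=\Enum f(X_\tau)\le\Enum f(X^*_\tau)$, which yields $\Enum\lintg^p=\Enum h_p(\tau)\le\Enum\tilde h_p(f(X^*_\tau))\le(C_p+2)\Enum(X^*_\tau)^p$. You correctly flagged the lower bound as the main obstacle, but the route you propose does not get past it, and the upper bound as sketched also fails at the stated intermediate estimate.
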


\begin{remark}
It is easy to check that both $\lintg$ and $\rintg$ do not depend on the continuous extension of $f$.
\end{remark}

\begin{remark}\label{reason}
In general, $\lintg$ in the lower bound and $\rintg$ in the upper bound cannot be replaced by $g(\tau)$ because of the discrete nature of the state space. To see this, we focus on the pure birth process $X$ with birth rates $\lambda_n = \lambda$ for any $n\geq 0$ and the stopping time $\tau$ is chosen as $\tau = \tau_1\wedge t$, where $\tau_1 = \inf\{t\geq 0:X_t = 1\}$. In this case, it is easy to check that $f_n = n/\lambda$ and thus $g(t) = \lambda t$. Moreover, we have
\begin{equation*}
\Enum(X^*_\tau)^p  = \Pnum(\tau_1\leq t) = 1-e^{-\lambda t},
\end{equation*}
and
\begin{equation*}
\Enum g(\tau)^p  = \lambda^p\Enum (\tau_1\wedge t)^p
= \lambda^p\left[\int_0^tu^p\lambda e^{-\lambda u}du+t^pe^{-\lambda t}\right].
\end{equation*}
This suggests that
\begin{equation*}
\lim_{t\rightarrow 0}\frac{\Enum(X^*_\tau)^p}{\Enum g(\tau)^p} =
\begin{cases}
0,\;\;\;0<p<1,\\
\infty,\;\;\;p>1.
\end{cases}
\end{equation*}
Thus when $0<p<1$, it is impossible to find $c_p>0$ such that for any stopping time $\tau$ of $X$,
\begin{equation*}
c_p\Enum g(\tau)^p \leq \Enum(X^*_\tau)^p.
\end{equation*}
Similarly, when $p>1$, it is impossible to find $C_p>0$ such that for any stopping time $\tau$ of $X$,
\begin{equation*}
\Enum(X^*_\tau)^p \leq C_p\Enum g(\tau)^p.
\end{equation*}
\end{remark}

In fact, Theorem \ref{thm1} can be rewritten in a neater form where the upper and lower bounds are given by the same function. As a tradeoff, we need to consider the $L^p$ maximal inequalities of $X+1$ rather than $X$.
\begin{corollary}\label{cor1}
Assume that the Peskir condition \eqref{peskir} holds for some $p>0$. Then there exist two positive constants $c_p$ and $C_p$ such that for any stopping time $\tau$ of $X$,
\begin{equation*}
c_p\Enum(g(\tau)+1)^p \leq \Enum(X^*_\tau+1)^p \leq C_p\Enum(g(\tau)+1)^p.
\end{equation*}
\end{corollary}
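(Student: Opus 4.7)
The plan is to derive the corollary directly from Theorem \ref{thm1} by bookkeeping a few elementary inequalities: once a ``$+1$'' is present on both sides, the gap between $\lint{g(\tau)}$ and $\rint{g(\tau)}$ becomes harmless, so the lower and upper bounds coalesce up to constants.

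First I would collect the relevant elementary facts. Since $g(\tau)\geq 0$, the floor/ceiling inequalities
\begin{equation*}
\lint{g(\tau)}\leq g(\tau)\leq\rint{g(\tau)}\leq g(\tau)+1,\qquad g(\tau)\leq\lint{g(\tau)}+1
\end{equation*}
hold pointwise. Combining these with the trivial bound $g(\tau)+1\leq 2(\lint{g(\tau)}+1)$ yields
\begin{equation*}
\tfrac{1}{2}(g(\tau)+1)\leq \lint{g(\tau)}+1\leq g(\tau)+1\leq \rint{g(\tau)}+1\leq g(\tau)+1+1\leq 2(g(\tau)+1).
\end{equation*}
I would then invoke the standard power-of-sum inequalities: there exist constants $\alpha_p,\beta_p>0$ (depending only on $p$) such that
\begin{equation*}
\alpha_p(a^p+b^p)\leq (a+b)^p\leq \beta_p(a^p+b^p)\qquad\text{for all }a,b\geq 0,
\end{equation*}
which follow from $\beta_p=\max(1,2^{p-1})$ and $\alpha_p=\min(1,2^{p-1})$.

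Next, for the upper bound, I would write
\begin{equation*}
\Enum(X^*_\tau+1)^p\leq \beta_p\bigl(\Enum(X^*_\tau)^p+1\bigr)\leq \beta_p\bigl(C_p\Enum\rintg^p+1\bigr),
\end{equation*}
using Theorem \ref{thm1}. Then the bound $\rint{g(\tau)}\leq g(\tau)+1$ gives $\Enum\rintg^p\leq \Enum(g(\tau)+1)^p$, while $1\leq \Enum(g(\tau)+1)^p$ since $g(\tau)\geq 0$. This produces the desired upper constant.

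For the lower bound, I would use the opposite direction of the power inequality to get $\Enum(X^*_\tau+1)^p\geq \alpha_p\bigl(\Enum(X^*_\tau)^p+1\bigr)\geq \alpha_p\min(c_p,1)\bigl(\Enum\lintg^p+1\bigr)$, again via Theorem \ref{thm1}. Applying $\beta_p(a^p+1)\geq (a+1)^p$ to $a=\lint{g(\tau)}$ and then the pointwise bound $\lint{g(\tau)}+1\geq \tfrac{1}{2}(g(\tau)+1)$ with $(\cdot)^p$ monotonicity converts this into a lower bound of the form $c'_p\,\Enum(g(\tau)+1)^p$. I do not anticipate a real obstacle here; everything is deterministic manipulation of $a\mapsto a^p$ combined with Theorem \ref{thm1}. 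The only mild subtlety is that the constants in the two directions of the $(a+b)^p$ inequality swap roles depending on whether $p\geq 1$ or $0<p<1$, so I would handle both cases by a single pair $\alpha_p=\min(1,2^{p-1})$, $\beta_p=\max(1,2^{p-1})$ and suppress the case distinction.
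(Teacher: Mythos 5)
Your argument is correct and is essentially the same as the paper's own proof: both deduce the corollary from Theorem \ref{thm1} via the equivalence $(x+1)^p \asymp x^p+1$ together with the pointwise comparisons between $\lintg$, $\rintg$, and $g(\tau)$. The only difference is that you track the constants ($\alpha_p$, $\beta_p$, the factor $2$) explicitly where the paper absorbs them into a $\lesssim$ notation.
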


\begin{proof}
It is easy to see that for any $p>0$, there exist two positive constants $k_p$ and $K_p$ such that for any $x\geq 0$,
\begin{equation*}
k_p(x^p+1) \leq (x+1)^p \leq K_p(x^p+1).
\end{equation*}
This shows that
\begin{equation*}
\Enum(g(\tau)+1)^p \lesssim \Enum g(\tau)^p+1 \lesssim \Enum\lintg^p+1 \lesssim \Enum(X^*_\tau)^p+1 \lesssim \Enum(X^*_\tau+1)^p,
\end{equation*}
where $x\lesssim y$ means that there exists $C>0$ only depending on $p$ such that $x\leq Cy$. On the other hand, we have
\begin{equation*}
\Enum(X^*_\tau+1)^p \lesssim \Enum(X^*_\tau)^p+1 \lesssim \Enum\rintg^p+1 \lesssim \Enum g(\tau)^p+1 \lesssim \Enum(g(\tau)+1)^p.
\end{equation*}
The above two equations give the desired result.
\end{proof}

\section{Moderate maximal inequalities}
The maximal inequalities in the above section only hold for some special $p>0$ for which the Peskir condition is satisfied. In this section, we shall prove that under stronger assumptions, the maximal inequalities can be established for all $p>0$ and even for more general functions. We first recall the following definition \cite[Page 164]{revuz1999continuous}.
\begin{definition}
A function $F:\Rnum^+\rightarrow\Rnum^+$ is called moderate if\\
(a) it is a continuous increasing function with $F(0) = 0$;\\
(b) there exists $\beta>1$ such that
\begin{equation}\label{requirement}
\sup_{x>0}\frac{F(\beta x)}{F(x)}<\infty.
\end{equation}
\end{definition}

It is easy to see that if $F$ is a moderate function, then \eqref{requirement} holds for any $\beta\geq 1$. In particular, $F(x) = x^p$ is a moderate function for any $p>0$. We next introduce a key definition.
\begin{definition}
$X$ is called controllable if there exist $\beta>1$ and $C,\gamma>0$ such that for any $t\geq 0$ and sufficiently large integer $k$,
\begin{equation}\label{control}
\Pnum_k(X^*_t\geq\lint{\beta k})\leq C\Pnum_0(X^*_t\geq\lint{\gamma k}),
\end{equation}
where $\Pnum_k(\cdot) = \Pnum(\cdot|X_0 = k)$.
\end{definition}

The following theorem, whose proof can be found in Section \ref{proof2}, gives the upper bound of the moderate maximal inequalities for upward skip-free Markov chains.
\begin{theorem}\label{thm2}
Assume that the Peskir condition \eqref{peskir} holds for some $p>0$. If $X$ is controllable, then for any moderate function $F$, there exists $C_F>0$ such that for any stopping time $\tau$ of $X$,
\begin{equation*}
\Enum F(X^*_\tau+1) \leq C_F\Enum F(g(\tau)+1).
\end{equation*}
\end{theorem}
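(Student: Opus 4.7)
The plan is to follow the classical Burkholder good-$\lambda$ scheme described in Revuz--Yor (IV.4.7, p.~164): first prove a distributional inequality that controls the probability that $X^*_\tau$ is large while $g(\tau)$ is small, and then convert it into an expectation inequality for $F(X^*_\tau+1)$ by integrating against $dF$ and absorbing.

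For the good-$\lambda$ step, let $\beta>1$, $\gamma>0$, $C>0$ be the constants from the controllability condition \eqref{control}, and let $p>0$ be the exponent from \eqref{peskir}. I aim to show that there is $C'>0$ such that for every $\delta\in(0,1)$, every sufficiently large integer $k$, and every stopping time $\tau$,
\begin{equation*}
\Pnum\bigl(X^*_\tau\ge \lint{\beta k},\ g(\tau)\le \delta k\bigr)\le C'\delta^p\,\Pnum(X^*_\tau\ge k).
\end{equation*}
Introduce the first-passage time $T_k=\inf\{t\ge 0:X_t\ge k\}$; the upward skip-free property gives $X_{T_k}=k$ on $\{T_k<\infty\}$, and $\{T_k\le\tau\}=\{X^*_\tau\ge k\}$. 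On $\{g(\tau)\le\delta k\}$ we have $\tau\le f(\delta k)$, so the supremum of $X$ on $[T_k,\tau]$ is dominated by its supremum on $[T_k,T_k+f(\delta k)]$. The strong Markov property at $T_k$ then yields
\begin{equation*}
\Pnum\bigl(X^*_\tau\ge \lint{\beta k},\ g(\tau)\le \delta k\bigr)\le \Pnum_k\bigl(X^*_{f(\delta k)}\ge \lint{\beta k}\bigr)\cdot \Pnum(X^*_\tau\ge k).
\end{equation*}
Applying controllability to the first factor replaces $\Pnum_k$ by $C\Pnum_0$ with rescaled threshold $\lint{\gamma k}$, Markov's inequality converts the resulting probability into a $p$-th moment, and Theorem \ref{thm1} applied at the deterministic time $f(\delta k)$ (for which $g(f(\delta k))=\delta k$) supplies $\Enum_0(X^*_{f(\delta k)})^p\le C_p\rint{\delta k}^p$. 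Consequently the first factor is at most $CC_p\rint{\delta k}^p/\lint{\gamma k}^p$, which is $O(\delta^p)$ uniformly once $k$ is large relative to $1/\delta$ and $1/\gamma$.

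For the integration step, set $Y=X^*_\tau+1$ and $Z=g(\tau)+1$. Modulo harmless $O(1)$ shifts, and handling the residual bounded range of $\lambda$ via the trivial bound $\Pnum(\cdot)\le 1$, the good-$\lambda$ inequality translates to
\begin{equation*}
\Pnum(Y\ge \beta\lambda)\le \epsilon(\delta)\,\Pnum(Y\ge \lambda)+\Pnum(Z>\delta\lambda)
\end{equation*}
for all $\lambda$, with $\epsilon(\delta)=O(\delta^p)\to 0$. The layer-cake identity with the substitution $\lambda=\beta\mu$ gives $\Enum F(Y)=\int_0^\infty \Pnum(Y>\beta\mu)\,dF(\beta\mu)$; plugging in the bound above and recognising the integrals as expectations yields
\begin{equation*}
\Enum F(Y)\le \epsilon(\delta)\,\Enum F(\beta Y)+\Enum F(\beta Z/\delta)+\text{const}.
\end{equation*}
The moderate-growth condition \eqref{requirement} gives $\Enum F(\beta Y)\le K\Enum F(Y)$ with $K=\sup_{x>0}F(\beta x)/F(x)$, and iterating \eqref{requirement} finitely many times yields a constant $K_\delta$ with $F(\beta z/\delta)\le K_\delta F(z)$ pointwise. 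Choosing $\delta$ so small that $K\epsilon(\delta)<1/2$, and using $\Enum F(Z)\ge F(1)$ to absorb the additive constant, rearranges to $\Enum F(Y)\le C_F\,\Enum F(Z)$. The absorption requires $\Enum F(Y)<\infty$; this is obtained by first applying the whole argument to the truncated stopping time $\tau_n=\tau\wedge T_n$, for which $X^*_{\tau_n}\le n$ and hence $\Enum F(X^*_{\tau_n}+1)\le F(n+1)<\infty$ automatically, and then passing to the limit $n\to\infty$ by monotone convergence.

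The principal obstacle is the good-$\lambda$ step: controllability alone merely compares $\Pnum_k$ with $\Pnum_0$ and produces no contraction as $\delta\to 0$. It is the $L^p$ maximal inequality of Theorem \ref{thm1}, invoked through Markov's inequality at the deterministic time $f(\delta k)$, that converts the qualitative comparison into the quantitative $\delta^p$-small factor required to close the scheme. The remaining work---the layer-cake integration, the absorption, and the truncation---is routine.
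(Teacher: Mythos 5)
Your proposal is correct and follows essentially the same route as the paper: the strong Markov property at the first-passage time $T_k$, controllability to pass from $\Pnum_k$ to $\Pnum_0$, Markov's inequality combined with Theorem \ref{thm1} at the deterministic time $f(\lint{\delta k})$ to extract the $O(\delta^p)$ contraction, and then the discrete good $\lambda$ machinery to convert the distributional bound into the moderate-function inequality. The only cosmetic differences are that the paper packages the integration/absorption step as a separate lemma (Lemma \ref{goodlambda}, reducing to bounded $F$ rather than truncating $\tau$), whereas you carry it out inline; the substance is identical.
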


The following theorem, whose proof can be found in Section \ref{proof2}, gives the lower bound of the moderate maximal inequalities for upward skip-free Markov chains.
\begin{theorem}\label{thm3}
Assume that there exist $\beta,M>1$ such that the following condition holds:
\begin{equation}\label{jia}
\lim_{\delta\downarrow 0}\sup_{k\in\Znum,\atop k\geq M}\frac{f(\lint{\delta k})}{f(\lint{\beta k})-f(k)} = 0.
\end{equation}
Then for any moderate function $F$, there exists $c_F>0$ such that for any stopping time $\tau$ of $X$,
\begin{equation*}
\Enum F(X^*_\tau+1) \geq c_F\Enum F(g(\tau)+1).
\end{equation*}
\end{theorem}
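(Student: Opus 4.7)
\textbf{Proof proposal for Theorem \ref{thm3}.}
The strategy is to derive a good $\lambda$ inequality of Burkholder type comparing $g(\tau)+1$ with $X^*_\tau+1$, and then invoke the standard machinery for moderate functions. Introduce the first-passage times $\sigma_n = \inf\{t \geq 0 : X_t = n\}$. Because $X$ is upward skip-free with $X_0 = 0$, the construction of $f$ gives $\Enum_i[\sigma_n] = f_n - f_i$ for $0 \leq i \leq n$. Moreover, since $X$ moves upward only in unit steps, $\{X^*_\tau < y\} = \{\sigma_{\rint{y}} > \tau\}$, and by monotonicity of $f$, $\{g(\tau) \geq y\} = \{\tau \geq f(y)\}$ for any $y > 0$.

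Fix $\beta > 1$ and $M$ from \eqref{jia} (enlarging $M$ so that $\lint{\beta k} > k$ for $k \geq M$). The central estimate I would prove is: for every stopping time $\tau$ of $X$, every $\delta \in (0, 1)$, and every integer $k \geq M$,
\[
\Pnum\bigl(g(\tau) \geq \beta k,\, X^*_\tau < \delta k\bigr) \leq \frac{f(\rint{\delta k})}{f(\lint{\beta k}) - f_k}\,\Pnum\bigl(g(\tau) \geq k\bigr).
\]
On the left-hand event one has $\tau \geq f(\beta k) \geq f(\lint{\beta k}) > f_k$ and $\sigma_{\rint{\delta k}} > \tau > f_k$, so it lies in $\{\tau \geq f_k\} \cap \{\sigma_{\rint{\delta k}} > f_k\} \cap \{\sigma_{\rint{\delta k}} > f(\beta k)\}$. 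Applying the Markov property at the deterministic time $f_k$, on $\{\sigma_{\rint{\delta k}} > f_k\}$ the conditional probability $\Pnum(\sigma_{\rint{\delta k}} > f(\beta k) \mid \mathcal{F}_{f_k})$ equals $\Pnum_{X_{f_k}}(\sigma_{\rint{\delta k}} > f(\beta k) - f_k)$, which Markov's inequality bounds by $(f(\rint{\delta k}) - f_{X_{f_k}})/(f(\beta k) - f_k) \leq f(\rint{\delta k})/(f(\lint{\beta k}) - f_k)$. Factoring this $\mathcal{F}_{f_k}$-measurable bound out of the expectation and using $\Pnum(\tau \geq f_k,\, \sigma_{\rint{\delta k}} > f_k) \leq \Pnum(g(\tau) \geq k)$ yields the claimed estimate.

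By \eqref{jia}, together with a routine comparison between $\rint{\delta k}$ and $\lint{\delta k}$ (e.g.\ $\rint{\delta k} \leq \lint{2\delta k}$ once $\delta k \geq 1$), the prefactor above tends to $0$ uniformly in $k \geq M$ as $\delta \downarrow 0$. Given such a good $\lambda$ inequality, the classical Burkholder--Revuz--Yor argument for moderate functions (see Revuz--Yor, Ch.~IV) applies: integrating the tail bound against $dF$ and using $\sup_x F(\beta x)/F(x) < \infty$ yields $\Enum F(g(\tau)+1) \leq C_F \Enum F(X^*_\tau + 1)$, where the $+1$ shifts (together with $F(X^*_\tau+1) \geq F(1) > 0$) absorb the additive constant coming from the finitely many levels $k < M$ where the good $\lambda$ bound may fail.

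The main obstacle is the estimate in the second paragraph: the random variables $\tau$ and $\sigma_{\rint{\delta k}}$ are intertwined through the same trajectory, so one must decompose the event carefully to expose a purely $\mathcal{F}_{f_k}$-measurable factor on which the Markov property at $f_k$ can be used. The appearance of $f(\lint{\beta k}) - f_k$, rather than $f(\lint{\beta k})$, in the denominator of \eqref{jia} is dictated by this argument: Markov's inequality is applied to the \emph{remaining} hitting time of length $f(\beta k) - f_k$ after time $f_k$, which explains why the hypothesis takes precisely this form.
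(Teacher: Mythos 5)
Your proof is essentially the paper's own: you condition at the deterministic time $f(k)$, bound the conditional probability of not reaching level roughly $\delta k$ within the remaining time $f(\lint{\beta k})-f(k)$ by Markov's inequality applied to the expected hitting time $\Enum_i\sigma_n=f_n-f_i\leq f_n$ (the paper phrases this by first reducing to the chain started from $0$ and using $\Enum_0\tau_{\lint{\delta k}}\leq f(\lint{\delta k})$, but it is the same estimate coming from the martingale $f(X_{\tau_n\wedge t})-\tau_n\wedge t$), and then run the discrete good-$\lambda$/moderate-function machinery exactly as in the paper's Lemma \ref{goodlambda}. The one point to tighten is your use of $\rint{\delta k}$: for integers $M\leq k<1/\delta$ the prefactor equals $f(1)/(f(\lint{\beta k})-f(k))$, which does not tend to $0$ as $\delta\downarrow 0$, so you must either replace $\rint{\delta k}$ by $\lint{\delta k}$ (as the paper does, which makes the bad event empty when $\delta k<1$) or carry the $+1$ shift into the good-$\lambda$ event so that $\set{X^*_\tau+1<\delta k}$ is empty in that range.
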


It is crucial to note that in the above theorem, the Peskir condition \eqref{peskir} is not a necessary condition for the lower bound. Combining the above two theorems, we obtain the following corollary, which is the main result of this paper.
\begin{corollary}\label{cor2}
Assume that the conditions of Theorems \ref{thm2} and \ref{thm3} are satisfied. Then for any moderate function $F$, there exist two positive constants $c_F$ and $C_F$ such that for any stopping time $\tau$ of $X$,
\begin{equation*}
c_F\Enum F(g(\tau)+1) \leq \Enum F(X^*_\tau+1) \leq C_F\Enum F(g(\tau)+1).
\end{equation*}
In particular, for any $p>0$, there exist two positive constants $c_p$ and $C_p$ such that for any stopping time $\tau$ of $X$,
\begin{equation*}
c_p\Enum(g(\tau)+1)^p \leq \Enum(X^*_\tau+1)^p \leq C_p\Enum(g(\tau)+1)^p.
\end{equation*}
\end{corollary}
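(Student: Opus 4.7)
My plan is to present Corollary \ref{cor2} as essentially a bookkeeping step: the two-sided bound on $\Enum F(X^*_\tau+1)$ is obtained by stringing together the inequalities already delivered by Theorems \ref{thm2} and \ref{thm3}, and the $L^p$ specialization then requires only the observation that $F(x) = x^p$ is moderate.

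More precisely, I would first invoke Theorem \ref{thm2}: under the Peskir condition \eqref{peskir} for some $p>0$ and the controllability hypothesis \eqref{control}, there exists a constant $C_F>0$, depending only on $F$ (and the fixed parameters of the chain), such that for every stopping time $\tau$,
\begin{equation*}
\Enum F(X^*_\tau+1) \leq C_F\Enum F(g(\tau)+1).
\end{equation*}
Next, I would invoke Theorem \ref{thm3}: under condition \eqref{jia}, there exists $c_F>0$ such that
\begin{equation*}
\Enum F(X^*_\tau+1) \geq c_F\Enum F(g(\tau)+1).
\end{equation*}
Since both sets of hypotheses are assumed simultaneously, concatenating these two inequalities yields the first displayed bound in the corollary.

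For the second (the $L^p$ version), I would apply the first part with the specific choice $F(x) = x^p$. One only needs to verify that $x^p$ fulfils the definition of a moderate function for every $p>0$: it is continuous, increasing, vanishes at $0$, and satisfies $F(\beta x)/F(x) = \beta^p$ for all $x>0$ and any $\beta>1$, so condition \eqref{requirement} is trivial. The general bound then specializes to $c_p\Enum(g(\tau)+1)^p \leq \Enum(X^*_\tau+1)^p \leq C_p\Enum(g(\tau)+1)^p$ with $c_p = c_{F}$ and $C_p = C_{F}$ for $F(x)=x^p$, which is what the corollary asserts.

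There is no genuine obstacle at this stage: all of the substantive work, in particular the good $\lambda$ inequality argument needed for the upper bound and the Lenglart-domination-based lower bound, is carried out in the proofs of Theorems \ref{thm2} and \ref{thm3}. The only thing to be a little careful about is that the constants $c_F, C_F$ produced by Theorems \ref{thm2} and \ref{thm3} may depend on $F$ through its moderating parameter $\beta$ in \eqref{requirement} and through the constants appearing in \eqref{control} and \eqref{jia}, but since these auxiliary constants are fixed once $F$ and the chain are fixed, the resulting $c_F, C_F$ indeed depend only on $F$ (and on $p$ in the specialization), as claimed.
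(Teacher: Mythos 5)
Your proof is correct and matches the paper's (implicit) argument exactly: the corollary is obtained by concatenating the upper bound of Theorem \ref{thm2} with the lower bound of Theorem \ref{thm3}, and the $L^p$ case follows by taking $F(x)=x^p$, which the paper has already noted is a moderate function for every $p>0$. No further comment is needed.
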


\begin{remark}
The conclusion of the above corollary, which holds for any moderate function $F$, is much stronger than that of Corollary \ref{cor1}, which only holds for some particular $p>0$. In the above corollary, $g(\tau)+1$ and $X^*_\tau+1$ cannot be replaced by $g(\tau)$ and $X^*_\tau$ for the same reasons as in Remark \ref{reason}.
\end{remark}

\section{M/M/1 queue and the related phase transition}
In the following two sections, we shall apply the above abstract theorems to two specific examples. In this section, we consider the moderate maximal inequalities for the M/M/1 queue. Recall that $X$ is called an M/M/1 queue if $X$ is a birth-death process whose birth and death rates are given by
\begin{equation*}
\begin{split}
\lambda_n = \lambda,\;\;\;n\geq 0,\;\;\;\mu_n = \mu,\;\;\;n\geq 1.
\end{split}
\end{equation*}
For convenience, let $\alpha = \mu/\lambda$ be the ratio of the death and birth rates.

If $X$ is an M/M/1 queue, then for any $n>i\geq 0$,
\begin{equation*}
F_{in} = \frac{\mu}{\lambda}\sum_{k=i}^{n-1}F_{ik}\delta_{k,n-1} = \alpha F_{i,n-1}.
\end{equation*}
This shows that $F_{in} = \alpha^{n-i}$ for any $n\geq i$ and
\begin{equation*}
m_n = \frac{1}{\lambda}\sum_{i=0}^nF_{in} =
\begin{cases}
\frac{n+1}{\lambda},\;\;\;&\alpha=1,\\
\frac{\alpha^{n+1}-1}{\lambda(\alpha-1)},\;\;\;&\alpha\neq 1.
\end{cases}
\end{equation*}
Thus we have
\begin{equation*}
f_n = \sum_{k=0}^{n-1}m_k =
\begin{cases}
\frac{n(n+1)}{2\lambda},\;\;\;&\alpha=1,\\
\frac{\alpha(\alpha^n-1)-(\alpha-1)n}{\lambda(\alpha-1)^2},\;\;\;&\alpha\neq 1.
\end{cases}
\end{equation*}

Let $h:\Rnum^+\rightarrow\Rnum^+$ be a strictly increasing continuous function with $h(0) = 0$ and $h(\infty) = \infty$ defined by
\begin{equation}\label{hfunction}
h(x) =
\begin{cases}
\frac{x}{\lambda(1-\alpha)},\;\;\;&\alpha<1,\\
\frac{x^2}{2\lambda},\;\;\;&\alpha=1,\\
\frac{\alpha(\alpha^x-1)}{\lambda(\alpha-1)^2},\;\;\;&\alpha>1.
\end{cases}
\end{equation}
It is easy to see that $f_n\sim h_n$ as $n\rightarrow\infty$, that is, $f_n/h_n\rightarrow 1$ as $n\rightarrow\infty$. Thus there exist two positive constants $c$ and $C$ such that $ch_n \leq f_n \leq Ch_n$ for any $n\geq 0$.

\begin{lemma}
Let $X$ be an M/M/1 queue. Then the following three statements hold:\\
(a) If $\alpha<1$, then the Peskir condition \eqref{peskir} holds if and only if $0<p<1$;\\
(b) If $\alpha=1$, then the Peskir condition \eqref{peskir} holds if and only if $0<p<2$;\\
(c) If $\alpha>1$, then the Peskir condition \eqref{peskir} holds for any $p>0$.
\end{lemma}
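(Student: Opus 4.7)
The plan is to reduce \eqref{peskir} to a cleaner tail-series condition and then handle the three regimes by standard asymptotic estimates. Since $f_n\sim h_n$ as $n\to\infty$ and both sequences are positive for $n\ge 1$, one has $f_n\asymp h_n$ on $\{n\ge 1\}$, so $f_n$ may be replaced by $h_n$ in \eqref{peskir} at the cost of absolute constants. Combined with the mean-value identity $k^p-(k-1)^p=p\xi^{p-1}$ for some $\xi\in[k-1,k]$, which yields $k^p-(k-1)^p\asymp k^{p-1}$ for $k\ge 1$, the Peskir condition is equivalent to
$$
\sup_{n\ge 1}\frac{h_n}{n^p}\sum_{k=n+1}^\infty\frac{k^{p-1}}{h_k}<\infty,
$$
where the implied constants depend only on $p$.

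Next I would analyse each case. For (a), $h_n\asymp n$, so the tail is $\sum_{k>n}k^{p-2}$, which by comparison with $\int_n^\infty x^{p-2}\,\dnum x$ is finite iff $p<1$ and, in that range, is $\asymp n^{p-1}$; multiplying by $h_n/n^p\asymp n^{1-p}$ yields a quantity uniformly bounded in $n$. If $p\ge 1$ the tail diverges and the condition fails. For (b), $h_n\asymp n^2$, so the tail is $\sum_{k>n}k^{p-3}$, finite iff $p<2$ and then $\asymp n^{p-2}$; the prefactor $h_n/n^p\asymp n^{2-p}$ again gives $O(1)$, and the threshold $p<2$ is sharp.

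For (c), with $h_n\asymp\alpha^n$ and $\alpha>1$, the tail is $\sum_{k>n}k^{p-1}\alpha^{-k}$. I will establish the two-sided estimate $\sum_{k>n}k^{p-1}\alpha^{-k}\asymp n^{p-1}\alpha^{-n}$ by writing $k=n+j$ and factoring out $\alpha^{-n}$: the upper bound uses $(n+j)^{p-1}\lesssim n^{p-1}+j^{p-1}$ when $p\ge 1$ (respectively $(n+j)^{p-1}\le n^{p-1}$ when $p<1$) together with the convergent geometric series in $j$, while the lower bound comes from retaining the single term $j=1$. Multiplying by $h_n/n^p\asymp\alpha^n/n^p$ then yields $O(1/n)$, which is bounded for all $n\ge 1$ after one checks each fixed small $n$ by hand, so the Peskir condition holds for every $p>0$. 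The one mildly technical step is this mixed polynomial-geometric tail estimate, which I expect to be the main (very modest) obstacle; cases (a) and (b) are routine integral-test comparisons.
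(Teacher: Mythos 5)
Your proposal is correct and follows essentially the same route as the paper: both reduce the Peskir condition via the mean value theorem and $f_n\asymp h_n$ to $\sup_{n\ge1}\frac{h_n}{n^p}\sum_{k>n}k^{p-1}/h_k<\infty$, handle (a) and (b) by integral-test comparison with the sharp thresholds $p<1$ and $p<2$, and handle (c) by the substitution $k=n+j$ with the splitting $(n+j)^{p-1}\lesssim n^{p-1}+j^{p-1}$ against a convergent geometric series. The only cosmetic difference is that you additionally record the matching lower bound in case (c), which the paper does not need.
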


\begin{proof}
By the mean value theorem, the Peskir condition holds for $p>0$ if and only if
\begin{equation*}
\sup_{n\geq 1}\frac{h_n}{n^p}\sum_{k=n+1}^\infty\frac{k^{p-1}}{h_k}<\infty.
\end{equation*}
When $\alpha<1$, the above power series converges if and only if $0<p<1$. For any $0<p<1$,
\begin{equation*}
\frac{h_n}{n^p}\sum_{k=n+1}^\infty\frac{k^{p-1}}{h_k} = n^{1-p}\sum_{k=n+1}^\infty k^{p-2}
\leq n^{1-p}\int_n^\infty x^{p-2}dx = \frac{1}{1-p}.
\end{equation*}
Thus we have proved (a). Similarly, we can prove (b). When $\alpha>1$, the Peskir condition holds for $p>0$ if and only if
\begin{equation}\label{aim}
\sup_{n\geq 1}\frac{\alpha^n}{n^p}\sum_{k=n+1}^\infty\frac{k^{p-1}}{\alpha^k}<\infty.
\end{equation}
Note that
\begin{equation*}
\frac{\alpha^n}{n^p}\sum_{k=n+1}^\infty\frac{k^{p-1}}{\alpha^k}
= \frac{1}{n^p}\sum_{k=1}^\infty\frac{(k+n)^{p-1}}{\alpha^k}.
\end{equation*}
When $p\leq 1$, we have $(k+n)^{p-1}\leq 1$. When $p>1$, we have
\begin{equation*}
(k+n)^{p-1}\leq \max\set{2^{p-2},1}(k^{p-1}+n^{p-1}).
\end{equation*}
In both cases, it is easy to see that \eqref{aim} holds. Thus we have proved (c).
\end{proof}

\begin{remark}
According to the above lemma, when $\alpha<1$ (or $\alpha = 1$), it is impossible to obtain the $L^p$ maximal inequality for $p\geq 1$ (or $p\geq 2$) via Theorem \ref{thm1}. This shows the limitations of the Peskir condition.
\end{remark}

\begin{lemma}
Let $X$ be an M/M/1 queue. Then the condition \eqref{jia} holds for $\beta = 2$.
\end{lemma}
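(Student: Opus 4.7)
The plan is to reduce the condition to an analogous estimate for the simpler asymptotic function $h$ from \eqref{hfunction}. Since $f_n\sim h_n$ as $n\to\infty$ and both sequences are strictly positive for $n\geq 1$, there exist constants $0<c\leq C$ with $ch_n\leq f_n\leq Ch_n$ for all $n\geq 1$. Noting that $\lint{2k}=2k$ for integer $k$ and that the ratio in \eqref{jia} vanishes whenever $\lint{\delta k}=0$ (i.e.\ whenever $k<1/\delta$), it therefore suffices to prove
$$\lim_{\delta\downarrow 0}\;\sup_{k\geq\max(M,1/\delta)}\;\frac{h(\lint{\delta k})}{h(2k)-h(k)}=0,$$
and the elementary bound $h(\lint{\delta k})\leq h(\delta k)$ (from monotonicity of $h$) will suffice in the two polynomial regimes.

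In the regime $\alpha<1$, $h$ is linear, so the numerator is at most $\delta h(k)$ while the denominator equals $h(k)$, giving a uniform bound of $\delta$. In the regime $\alpha=1$, $h(x)=x^2/(2\lambda)$, so the numerator is at most $\delta^2 k^2/(2\lambda)$ while the denominator equals $3k^2/(2\lambda)$, giving a uniform bound of $\delta^2/3$. Both bounds are independent of $k$ and tend to $0$ as $\delta\downarrow 0$.

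The only case requiring real care, and the main obstacle, is $\alpha>1$, where after cancelling the common prefactor of $h$ the ratio becomes $(\alpha^{\lint{\delta k}}-1)/[\alpha^k(\alpha^k-1)]$. I choose $M$ so large that $\alpha^M\geq 2$; then the denominator is at least $\alpha^{2k}/2$ for $k\geq M$, and $\alpha^{\lint{\delta k}}-1\leq \alpha^{\delta k}$, so the ratio is bounded above by $2\alpha^{(\delta-2)k}$. For $\delta\leq 1$ this upper bound is monotone decreasing in $k$, so the supremum over $k\geq\max(M,1/\delta)$ is attained at the smallest admissible $k$ (which is of order $1/\delta$), yielding a bound that decays like $\alpha^{-2/\delta}$ and hence tends to $0$ as $\delta\downarrow 0$. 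Combining the three regimes gives the claim with $\beta=2$.
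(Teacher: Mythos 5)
Your three-regime analysis of $h$ is correct, and your treatment of $\alpha>1$ is a nice variant of the paper's: instead of bounding $\sup_{y\geq M}(y^\delta-1)/(y^2-y)$ by splitting on whether $y=\alpha^k$ is large or small (as the paper does), you observe that the ratio vanishes unless $k\geq 1/\delta$, so $\alpha^k$ is automatically enormous when $\delta$ is small and the crude bound $2\alpha^{(\delta-2)k}\leq 2\alpha^{1-2/\delta}$ suffices. That observation is valid since $f(0)=0$.

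The soft spot is the reduction step ``it therefore suffices to prove the statement for $h$.'' The two-sided bound $c\,h_n\leq f_n\leq C\,h_n$ with fixed constants $0<c\leq C$ handles the numerator, but it does \emph{not} control the denominator: it only gives $f(2k)-f(k)\geq c\,h(2k)-C\,h(k)$, and this right-hand side can be negative or of the wrong order — e.g.\ in the linear regime $\alpha<1$ it equals $(2c-C)h(k)$, which is $\leq 0$ whenever $C\geq 2c$, and nothing in ``$f_n\sim h_n$'' forces $C<2c$. So as written the passage from $f(2k)-f(k)$ to $h(2k)-h(k)$ is unjustified. The repair is short and is exactly what the paper asserts: since $f_n/h_n\to 1$, for any $\epsilon>0$ one has $(1-\epsilon)h_n\leq f_n\leq(1+\epsilon)h_n$ for all large $n$, whence
\begin{equation*}
f(2k)-f(k)\;\geq\;h(2k)-h(k)-2\epsilon\,h(2k),
\end{equation*}
and in each of the three regimes $h(2k)-h(k)\geq\kappa\,h(2k)$ for some $\kappa>0$ (with $\kappa=1/2$, $3/4$, and $1-\alpha^{-k}$ respectively), so choosing $\epsilon<\kappa/4$ yields $f(2k)-f(k)\geq\tfrac{\kappa}{2}\,(h(2k)-h(k))$ for $k$ large; equivalently, $f(2k)-f(k)\sim h(2k)-h(k)$ as $k\to\infty$. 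With that line inserted, your argument is complete and otherwise follows the same route as the paper's proof.
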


\begin{proof}
It is easy to check that $f(2k)-f(k)\sim h(2k)-h(k)$ as $k\rightarrow\infty$. Thus when $k$ is sufficiently large, we have
\begin{equation*}
\frac{f(\lint{\delta k})}{f(2k)-f(k)} \leq \frac{2Ch(\lint{\delta k})}{h(2k)-h(k)}
\leq \frac{2Ch(\delta k)}{h(2k)-h(k)}.
\end{equation*}
When $\alpha\leq 1$, it is easy to check that there exists $M\geq 1$ such that
\begin{equation*}
\lim_{\delta\downarrow 0}\sup_{k\geq M}\frac{h(\delta k)}{h(2k)-h(k)} = 0.
\end{equation*}
When $\alpha>1$, we have
\begin{equation*}
\frac{h(\delta k)}{h(2k)-h(k)} = \frac{\alpha^{\delta k}-1}{\alpha^{2k}-\alpha^k} = \frac{y^\delta-1}{y^2-y},
\end{equation*}
where $y = \alpha^k$. When $y$ is sufficiently large, we have $y^2-y\geq y\vee1$ and thus the last term of the above equation is controlled by both $y^{\delta-1}$ and $y^\delta-1$. For any $\epsilon>0$ and $0<\delta<1/2$, when $y\geq 1/\epsilon^2$, we have $y^{\delta-1}\leq\epsilon$. For any $\delta\leq\log_{1/\epsilon^2}(1+\epsilon)$, when $y\leq 1/\epsilon^2$, we have $y^\delta-1\leq\epsilon$. The above analysis shows that there exists $M\geq 1$ such that
\begin{equation*}
\lim_{\delta\downarrow 0}\sup_{y\geq M}\frac{y^\delta-1}{y^2-y} = 0,
\end{equation*}
which gives the desired result.
\end{proof}

The following lemma is interesting in its own right.
\begin{lemma}\label{birthdeath}
Let $X$ be a birth-death process. Assume that $\set{\lambda_n:n\geq 0}$ is a decreasing sequence and $\set {\mu_n:n\geq 1}$ is an increasing sequence. Then for any $m,n\in\mathbb{Z}^+$ and $m\leq n$,
\begin{equation*}
\Pnum_m(X^*_t\geq n)\leq \Pnum_0(X^*_t\geq n-m).
\end{equation*}
In particular, $X$ is controllable.
\end{lemma}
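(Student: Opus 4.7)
The plan is to prove the lemma via an order-preserving (monotonicity) coupling. Let $X^{(0)}$ and $X^{(m)}$ denote the birth-death process started at $0$ and $m$, respectively. I will construct a Markovian coupling $(Y_t,Z_t)_{t\ge 0}$ on the state space $\set{(y,z)\in\Znum\times\Znum^+:-m\le y\le z}$ with $Y_0=Z_0=0$, whose marginals satisfy $Y_t\stackrel{d}{=}X^{(m)}_t-m$ and $Z_t\stackrel{d}{=}X^{(0)}_t$, and in which the ordering $Y_t\le Z_t$ is preserved for every $t\ge 0$. Taking suprema in this coupling gives $X^{(m)*}_t\le X^{(0)*}_t+m$ almost surely, so $\set{X^{(m)*}_t\ge n}\subseteq\set{X^{(0)*}_t\ge n-m}$, which yields the stated probability inequality.

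The heart of the argument is the specification of the coupling generator. On the diagonal $y=z=k$, the two chains are forced to move together whenever possible: a joint up-jump $(k,k)\to(k+1,k+1)$ at rate $\min(\lambda_k,\lambda_{k+m})=\lambda_{k+m}$ and a joint down-jump $(k,k)\to(k-1,k-1)$ at rate $\min(\mu_k,\mu_{k+m})=\mu_k$ (with the convention $\mu_0=0$). The residual up-rate $\lambda_k-\lambda_{k+m}\ge 0$ is assigned to a $Z$-only up-jump, and the residual down-rate $\mu_{k+m}-\mu_k\ge 0$ is assigned to a $Y$-only down-jump. The monotonicity hypotheses on $(\lambda_n)$ and $(\mu_n)$ are precisely what make these residuals nonnegative, so the construction is admissible. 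Off the diagonal, $y<z$, the two chains are simply run with independent clocks at their original rates.

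A transition-by-transition inspection then verifies three points: (i) every admissible move keeps the pair in $\set{-m\le y\le z}$ — on the diagonal the only non-synchronized moves strictly widen the gap, and off the diagonal every independent move preserves $y\le z$ (the only borderline case being a $Z$-down move when $y=z-1$, which leaves $y=z-1$, still valid); (ii) the marginal up- and down-rates at every state of $Z$ and $Y$ coincide with those of $X^{(0)}$ and $X^{(m)}-m$ respectively; and (iii) the coupled process is non-explosive, because its marginals are. The most delicate bookkeeping occurs at the boundary states $y=-m$ (where $Y$ can no longer descend) and $z=0$ (where $Z$ can no longer descend), but both are handled uniformly by the convention $\mu_0=0$ and the standing assumption $\lambda_n>0$, and I expect this to be the main obstacle — not a conceptual one, just careful accounting.

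Controllability is then an immediate corollary. For any fixed $\beta>1$, applying the main inequality with $(m,n)\mapsto(k,\lint{\beta k})$ and using the elementary identity $\lint{\beta k}-k=\lint{(\beta-1)k}$ (valid because $k$ is an integer) gives
\begin{equation*}
\Pnum_k(X^*_t\ge\lint{\beta k})\le\Pnum_0(X^*_t\ge\lint{(\beta-1)k}),
\end{equation*}
which is condition \eqref{control} with $C=1$ and $\gamma=\beta-1$, uniformly in $t\ge 0$ and $k\ge 1$.
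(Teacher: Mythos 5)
Your proposal is correct and follows essentially the same route as the paper: an explicit coupling of the two chains whose admissibility rests exactly on the monotonicity of $(\lambda_n)$ and $(\mu_n)$, yielding the pathwise domination $X^{(m)}_t\le X^{(0)}_t+m$ and hence the stated inequality. The only difference is cosmetic --- the paper synchronizes the unshifted chains at every state $(i,j)$ with $i\ge j$ so that the gap is non-increasing from its initial value $m$, whereas you shift one chain by $m$ and synchronize only on the diagonal with independent clocks elsewhere --- and your explicit derivation of controllability with $C=1$ and $\gamma=\beta-1$ matches what the paper leaves implicit.
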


\begin{proof}
Consider a continuous-time Markov chain $(Y,Z) = \set{(Y_t,Z_t):t\geq 0}$ on the state space
\begin{equation*}
S = \set{(i,j)\in\mathbb{Z}^+\times\mathbb{Z}^+:i\geq j}
\end{equation*}
whose all possible nonzero transition rates are given by
\begin{equation}\label{transition1}
\begin{split}
q_{(i,j),(i+1,j+1)} &= \lambda_i,\;\;\;q_{(i,j),(i,j+1)} = \lambda_j-\lambda_i,\\
q_{(i,j),(i-1,j-1)} &= \mu_j,\;\;\;q_{(i,j),(i-1,j)} = \mu_i-\mu_j.
\end{split}
\end{equation}
Assume that $(Y,Z)$ starts from $(m,0)$. It is easy to check that
\begin{equation*}
\begin{split}
\sum_{l=0}^k q_{(i,j),(k,l)} &= q_{ik},\;\;\;\forall 0\leq j\geq i,i\neq k,\\
\sum_{k=l}^\infty q_{(i,j),(k,l)} &= q_{jl},\;\;\;\forall i\geq j,j\neq l.
\end{split}
\end{equation*}
Since the first equation holds for all $0\leq j\geq i$ and the second one holds for all $i\geq j$, it is easy to see that \cite[Chapter III, Lemma 1.2 and Theorem 1.3]{liggett2005interacting}
\begin{equation*}
\begin{split}
\Pnum(Y_{t+h}=k|Y_t=i,\mathscr{F}_t) = q_{ik}h+o(h),\;\;\;\forall i\neq k,\\
\Pnum(Z_{t+h}=l|Z_t=j,\mathscr{F}_t) = q_{jl}h+o(h),\;\;\;\forall j\neq l,
\end{split}
\end{equation*}
where $\mathscr{F}_t = \sigma(Y_s,Z_s,0\leq s\leq t)$. This shows that both $Y$ and $Z$ are birth-death processes with birth rates $\lambda_n$ and death rates $\mu_n$, where $Y$ starts from $m$ and $Z$ starts from 0. According to the transition rates given in \eqref{transition1}, each state $(i,j)$ can jump to $(i+1,j+1)$ or $(i-1,j-1)$. In this case, the distance between $Y$ and $Z$ remains the same. Moreover, each state $(i,j)$ with $i>j$ can also jump to $(i,j+1)$ or $(i-1,j)$. In this case, the distance between $Y$ and $Z$ becomes closer. Therefore, the distance between $Y$ and $Z$ remains the same or becomes closer after every single jump. This shows that
\begin{equation*}
\Pnum_m(Y^*_t\geq n)\leq \Pnum_0(Z^*_t\geq n-m),
\end{equation*}
which gives the desired result.
\end{proof}

The following theorem gives the moderate maximal inequalities for the M/M/1 queue.
\begin{theorem}
Let $X$ be an M/M/1 queue. Then for any moderate function $F$, there exist two positive constants $c_F$ and $C_F$ such that for any stopping time $\tau$ of $X$,
\begin{equation*}
c_F\Enum F(g(\tau)+1) \leq \Enum F(X^*_\tau+1) \leq C_F\Enum F(g(\tau)+1).
\end{equation*}
Moreover, the asymptotic behavior of the function $g$ is given by
\begin{equation*}
g(t) \sim
\begin{cases}
\lambda(1-\alpha)t,\;\;\;&\alpha<1,\\
\sqrt{2\lambda t},\;\;\;&\alpha=1,\;\;\;\textrm{as}\;t\rightarrow\infty.\\
\log_\alpha t,\;\;\;&\alpha>1,
\end{cases}
\end{equation*}
\end{theorem}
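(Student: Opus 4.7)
The plan is to assemble the three preceding lemmas into a direct application of Corollary \ref{cor2}, and then to compute the asymptotics of $g$ by explicitly inverting the function $h$ from \eqref{hfunction}. More precisely, the first lemma of this section supplies the Peskir condition \eqref{peskir} for at least one value of $p > 0$ in every regime ($0<p<1$ when $\alpha<1$, $0<p<2$ when $\alpha=1$, any $p>0$ when $\alpha>1$), the second lemma verifies condition \eqref{jia} with $\beta = 2$, and Lemma \ref{birthdeath} provides controllability of $X$ because for the M/M/1 queue the birth sequence $\lambda_n \equiv \lambda$ is trivially (non-strictly) decreasing and the death sequence $\mu_n \equiv \mu$ (for $n\geq 1$) is trivially increasing. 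Together these meet the hypotheses of both Theorems \ref{thm2} and \ref{thm3}, so Corollary \ref{cor2} yields the two-sided moderate maximal inequality at once.

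For the asymptotic part, I would start from the already-noted equivalence $f_n \sim h_n$ as $n \to \infty$, which immediately extends to the continuous interpolations since $h$ is continuous and strictly increasing on $\Rnum^+$. Because $f$ is strictly increasing with $f(0)=0$ and $f(\infty) = \infty$, inverting both sides gives $g(t) \sim h^{-1}(t)$ as $t \to \infty$. So the task reduces to inverting $h$ in each of the three regimes.

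When $\alpha < 1$, $h(x) = x/[\lambda(1-\alpha)]$ is linear and $h^{-1}(t) = \lambda(1-\alpha)t$. When $\alpha = 1$, $h(x) = x^2/(2\lambda)$ and $h^{-1}(t) = \sqrt{2\lambda t}$. When $\alpha > 1$, solving $t = \alpha(\alpha^x - 1)/[\lambda(\alpha-1)^2]$ for $x$ yields
\begin{equation*}
x = \log_\alpha\!\Bigl(1 + \tfrac{\lambda(\alpha-1)^2}{\alpha}\, t\Bigr) \sim \log_\alpha t \quad \text{as } t \to \infty,
\end{equation*}
since the additive constant inside the logarithm becomes negligible on the log scale. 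This delivers the three claimed asymptotics.

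There is no serious obstacle: all the quantitative work has been done in the three preceding lemmas, and the inversion of $h$ is elementary. The only point requiring a moment of care is the transition from $f_n \sim h_n$ (a relation stated only on integers) to $g(t) \sim h^{-1}(t)$ (on $\Rnum^+$); this is handled by appealing to the continuous monotone extension of $f$ used in Section 2 together with the monotonicity of $h^{-1}$, so that errors of order one in the argument of $f$ produce only errors of order one in $g$, which are absorbed in the $\sim$ relation for all three growth rates.
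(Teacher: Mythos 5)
Your proposal is correct and follows essentially the same route as the paper: the paper's proof likewise combines Corollary \ref{cor2} with the three preceding lemmas and then obtains the asymptotics by noting $g(t)\sim h^{-1}(t)$ and inverting \eqref{hfunction}. You simply spell out the details (verification of the hypotheses and the explicit inversion in each regime) that the paper leaves as ``easy to check.''
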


\begin{proof}
Combining Corollary \ref{cor2} and the above three lemmas, we obtain the moderate maximal inequalities for $X$. Moreover it is easy to check that $g(t)\sim h^{-1}(t)$ as $t\rightarrow\infty$. The asymptotic behavior of $g$ follows directly from \eqref{hfunction}.
\end{proof}

The above theorem reveals a phase transition of the M/M/1 queue as the parameter $\alpha = \mu/\lambda$ varies. For simplicity, we choose a very large time $t$. When the birth rate is larger than the death rate, $X^*_t$ on average behaves as $\lambda(1-\alpha)t$. When the birth and death rates are equal, $X^*_t$ on average behaves as $\sqrt{2\lambda t}$. When the birth rate is smaller than the death rate, $X^*_t$ on average behaves as $\log_\alpha t$.

\section{Upward skip-free Markov chains with large death jumps}
As a comparison with the M/M/1 queue, we next consider the moderate maximal inequalities for an upward skip-free Markov chain with large death jumps. Specifically, we consider an upward skip-free Markov chain $X$ whose transition rates are given by
\begin{equation}\label{rates}
\begin{split}
\lambda_n &= \lambda,\;\;\;n\geq 0,\\
\mu_n &= q_{n0} = \mu,\;\;\;n\geq 1,\;\;\;q_{nj} = 0,\;\;\;1\leq j<n.
\end{split}
\end{equation}
The total birth and death rates of $X$ are exactly the same as those of the M/M/1 queue. For convenience, let $\alpha = \mu/\lambda$ be the ratio of the total death and birth rates.

It is easy to check that for any $n>i\geq 0$,
\begin{equation*}
F_{in} = \frac{\mu}{\lambda}\sum_{k=i}^{n-1}F_{ik}.
\end{equation*}
This suggests that $F_{i,i+1} = \alpha$ and $F_{i,n+1} = (\alpha+1)F_{in}$ for any $n\geq i+1$. Thus for any $n\geq i+1$,
\begin{equation*}
F_{in} = \alpha(\alpha+1)^{n-i-1}.
\end{equation*}
It is then easy to check that
\begin{equation*}
\begin{split}
m_n &= \frac{1}{\lambda}\sum_{i=0}^nF_{in} = \frac{1}{\lambda}(\alpha+1)^n,\\
f_n &= \sum_{k=0}^{n-1}m_k = \frac{1}{\mu}[(\alpha+1)^n-1].
\end{split}
\end{equation*}
Thus we obtain that
\begin{equation*}
g(t) = \log_{\alpha+1}(\mu t+1).
\end{equation*}

In analogy to the proofs in the above section, it can be proved that the Peskir condition \eqref{peskir} holds for any $p>0$ and the condition \eqref{jia} holds for any $\beta>1$.
\begin{lemma}
Let $X$ be an upward skip-free Markov chain. Assume that $\{\lambda_n:n\geq 0\}$ is a decreasing sequence and $\mu_n = q_{n,0} = \mu$ for any $n\geq 1$. Then for any $m,n\in\mathbb{Z}^+$ and $m\leq n$,
\begin{equation*}
\Pnum_m(X^*_t\geq n)\leq \Pnum_0(X^*_t\geq n-m).
\end{equation*}
In particular, $X$ is controllable.
\end{lemma}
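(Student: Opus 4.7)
The plan is to adapt the coupling argument of Lemma~\ref{birthdeath} to the present setting, exploiting the fact that all death jumps of $X$ target the single state $0$ at the common rate $\mu$, while the birth rates $\{\lambda_n\}$ are decreasing. I would construct a continuous-time Markov chain $(Y, Z) = \{(Y_t, Z_t) : t \geq 0\}$ on the state space $S = \{(i, j) \in \Znum^+ \times \Znum^+ : i \geq j\}$, started at $(m, 0)$, whose only nonzero transition rates are $q_{(i,j), (i+1, j+1)} = \lambda_i$, $q_{(i,j), (i, j+1)} = \lambda_j - \lambda_i$, and $q_{(i,j),(0,0)} = \mu$ for $i \geq 1$. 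The first rate performs a joint birth at the slower of the two marginal birth rates (noting that $\lambda_j \geq \lambda_i$ since $j \leq i$ and $\lambda$ is decreasing), the second gives $Z$ an extra birth so as to realize its marginal rate $\lambda_j$, and the third couples the two death jumps into a single simultaneous transition to the common catastrophe state $0$.

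Following the same Liggett-type computation cited in Lemma~\ref{birthdeath}, I would then verify that the marginals of $Y$ and $Z$ are both copies of $X$: for the $Y$-marginal, the move $i \to i+1$ occurs at rate $\lambda_i$ and the move $i \to 0$ at rate $\mu$; for the $Z$-marginal, the move $j \to j+1$ occurs at total rate $\lambda_i + (\lambda_j - \lambda_i) = \lambda_j$ and the move $j \to 0$ at rate $\mu$ whenever $j \geq 1$. The crucial monotonicity is immediate from inspection of the three transition types: a joint birth leaves $Y_t - Z_t$ unchanged, the extra $Z$-birth decreases it by one, and the joint death collapses it to $0$. Consequently $Y_t - Z_t \leq Y_0 - Z_0 = m$ for all $t$, whence $Y^*_t \leq Z^*_t + m$ and therefore $\Pnum_m(X^*_t \geq n) = \Pnum(Y^*_t \geq n) \leq \Pnum(Z^*_t \geq n - m) = \Pnum_0(X^*_t \geq n - m)$.

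Controllability would then follow directly: given any $\beta > 1$, pick $0 < \gamma \leq \beta - 1$; for all sufficiently large $k$ one has $\lfloor \beta k \rfloor - k \geq \lfloor \gamma k \rfloor$, and the inequality above yields $\Pnum_k(X^*_t \geq \lfloor \beta k \rfloor) \leq \Pnum_0(X^*_t \geq \lfloor \gamma k \rfloor)$ with constant $C = 1$, matching the definition of controllability.

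The main potential obstacle I anticipate is the boundary case $j = 0$: one must ensure that the coupled transition $(i, 0) \to (0, 0)$ at rate $\mu$ reproduces a death of $Y$ without inducing a spurious move of $Z$, and also that no joint death rate is erroneously added when $Z = 0$. Because every death of $X$ targets the same state $0$, the proposed transition automatically leaves $Z$ at $0$ and supplies $Y$ with its correct death rate $\mu$, so the marginal for $Z$ correctly has no death contribution when $j = 0$. This structural feature (all death jumps to a single common state) is precisely what makes the coupling simpler here than in the birth-death case of Lemma~\ref{birthdeath}, where one had to introduce a separate $Y$-only death at rate $\mu_i - \mu_j$ to account for the monotone death rates.
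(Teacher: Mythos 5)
Your proposal is correct and coincides with the paper's own argument: the paper uses exactly the same coupling $(Y,Z)$ on $S=\set{(i,j):i\geq j}$ with rates $q_{(i,j),(i+1,j+1)}=\lambda_i$, $q_{(i,j),(i,j+1)}=\lambda_j-\lambda_i$, and $q_{(i,j),(0,0)}=\mu$, verifies the marginals via the same Liggett lemma, and concludes from the non-increase of $Y_t-Z_t$. Your handling of the boundary case $j=0$ and the derivation of controllability with $C=1$ match the intended argument.
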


\begin{proof}
Consider a continuous-time Markov chain $(Y,Z) = \set{(Y_t,Z_t):t\geq 0}$ on the state space
\begin{equation*}
S = \set{(i,j)\in\mathbb{Z}^+\times\mathbb{Z}^+:i\geq j}
\end{equation*}
whose all possible nonzero transition rates are given by
\begin{equation}\label{transition2}
\begin{split}
q_{(i,j),(i+1,j+1)} &= \lambda_i,\;\;\;q_{(i,j),(i,j+1)} = \lambda_j-\lambda_i,\;\;\;(i,j)\in S,\\
q_{(i,j),(0,0)} &= \mu,\;\;\;(i,j)\neq (0,0).
\end{split}
\end{equation}
By imitating the proof in Lemma \ref{birthdeath}, it is easy to see that both $Y$ and $Z$ are skip-free Markov chains with birth rates $\lambda_n$ and death rates $\mu_n = q_{n0} = \mu$ \cite[Chapter III, Lemma 1.2 and Theorem 1.3]{liggett2005interacting}. According to the transition rates given in \eqref{transition2}, the distance between $Y$ and $Z$ remains the same or becomes closer after every single jump. This shows that
\begin{equation*}
\Pnum_m(Y^*_t\geq n)\leq \Pnum_0(Z^*_t\geq n-m),
\end{equation*}
which gives the desired result.
\end{proof}

Combining Corollary \ref{cor2} and the above lemma, we obtain the following theorem.
\begin{theorem}
Let $X$ be an upward skip-free Markov chain whose transition rates are given in \eqref{rates}. Then for any moderate function $F$, there exist two positive constants $c_F$ and $C_F$ such that for any stopping time $\tau$ of $X$,
\begin{equation*}
c_F\Enum F(\log_{\alpha+1}(\mu\tau+1)+1) \leq \Enum F(X^*_\tau+1) \leq C_F\Enum F(\log_{\alpha+1}(\mu\tau+1)+1).
\end{equation*}
\end{theorem}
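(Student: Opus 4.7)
The plan is to deduce the theorem directly from Corollary \ref{cor2}, whose hypotheses are the Peskir condition \eqref{peskir}, controllability, and condition \eqref{jia}. For the process with rates \eqref{rates}, the preceding lemma already supplies controllability, since $\lambda_n \equiv \lambda$ is trivially a decreasing sequence and the death mechanism has the required single-atom form $\mu_n = q_{n,0} = \mu$. It therefore remains to verify \eqref{peskir} and \eqref{jia}, using the explicit expressions $f_n = \frac{1}{\mu}[(\alpha+1)^n - 1]$ and $g(t) = \log_{\alpha+1}(\mu t+1)$ computed just before the theorem.

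For \eqref{peskir}, I would show that the condition holds for every $p > 0$, in direct analogy with case (c) of the M/M/1 lemma, since $f_n$ grows exponentially with base $a := \alpha + 1 > 1$. After using the mean value theorem to replace $k^p - (k-1)^p$ by $p k^{p-1}$ up to a constant, the problem reduces to bounding
\[
\sup_{n\geq 1} \frac{1}{n^p} \sum_{j=1}^\infty \frac{(n+j)^{p-1}}{a^j},
\]
which is handled by the polynomial inequality $(n+j)^{p-1} \leq \max\{2^{p-2},1\}(n^{p-1}+j^{p-1})$ when $p > 1$, or by the trivial bound $(n+j)^{p-1} \leq 1$ when $p \leq 1$, together with the convergence of $\sum_j j^{p-1} a^{-j}$.

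For \eqref{jia}, I would verify the condition for an arbitrary $\beta > 1$ by setting $y = a^k$ and bounding the ratio by
\[
\frac{f_{\lint{\delta k}}}{f_{\lint{\beta k}} - f_k} \;\lesssim\; \frac{y^\delta - 1}{y^\beta - y},
\]
where the $-1$ in the numerator (inherited from the $-1$ in the formula for $f_n$) is the essential feature that drives the ratio to zero as $\delta \downarrow 0$. The two-regime argument of the M/M/1 lemma for $\alpha > 1$ then applies: the right-hand side is dominated by $y^{\delta - 1}$ once $y$ is large enough that $y^\beta - y \geq y$, and by $y^\delta - 1$ on any bounded range of $y$; both can be made arbitrarily small by taking $\delta$ small and $M$ large.

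Combining these three ingredients through Corollary \ref{cor2} and substituting $g(t) = \log_{\alpha+1}(\mu t+1)$ yields the claimed two-sided moderate maximal inequality. I expect the main obstacle to be the bookkeeping in \eqref{jia}, where the interaction of the floor functions with the limit $\delta \downarrow 0$ must be balanced against the exponential blow-up of the denominator uniformly in $k \geq M$; but because $\beta > 1$ forces the denominator to grow much faster than the numerator, this balancing proceeds exactly as in the $\alpha > 1$ branch of the M/M/1 proof and requires no new ideas.
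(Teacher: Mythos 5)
Your proposal is correct and follows exactly the paper's route: the paper likewise derives the theorem from Corollary \ref{cor2} by invoking the controllability lemma for chains with decreasing $\lambda_n$ and $q_{n0}=\mu$, and by checking the Peskir condition for all $p>0$ and condition \eqref{jia} for all $\beta>1$ ``in analogy to the proofs in the above section,'' i.e.\ via the exponential form $f_n=\frac{1}{\mu}[(\alpha+1)^n-1]$ just as you do. Your write-up actually supplies the details of these two verifications that the paper leaves to the reader, and they check out.
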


The above theorem indicates that although the total birth and death rates of $X$ are exactly the same as those of the M/M/1 queue, the previous process does not exhibit phase transition. For any $\alpha>0$, $X^*_t$ on average behaves as $\log_{\alpha+1}t$ when $t$ is sufficiently large. This is because the large death jumps hinder $X$ from traveling too far.

\section{Proof of Theorem \ref{thm1}}\label{proof1}
The proof of Theorem \ref{thm1} is based on the discrete version of the Lenglart domination principle which is stated below (see \cite{peskir2001bounding} for the continuous version). Let $(\Omega,\F,\set{\F_t},\Pnum)$ be a filtered probability space. In the following lemma, the adapted processes and the stopping times are understood to be with respect to $\set{\F_t}$. Moreover, for any adapted process $Z$ and stopping time $\tau$, we shall use $Z^\tau$ to denote the stopped process $\{Z_{t\wedge\tau}:t\geq 0\}$.
\begin{lemma}\label{langlart}
Let $Z$ and $A$ be two c\`{a}dl\`{a}g adapted processes with $Z_0 = A_0 = 0$, where $Z$ is nonnegative and $A$ is increasing. Let $\set{x_n:n\geq 0}$ be an increasing sequences with $x_0 = 0$ and $x_\infty = \infty$. Let $h$ be an increasing step function on $\Rnum^+$ satisfying $h(0) = 0$ and
\begin{equation*}
h(x) = h(x_n),\;\;\;\textrm{if}\;x_n\leq x<x_{n+1}\;\textrm{for some $n\geq 0$}.
\end{equation*}
For any $n\geq 0$, let
\begin{equation*}
\tau_n  = \inf\set{t\geq 0:Z_t\geq x_n},\;\;\;\sigma_n  = \inf\set{t\geq 0:A_t\geq x_n}.
\end{equation*}
Assume that $Z_{\tau_n} = A_{\sigma_n} = x_n$ for any $n\geq 0$ and assume that $\Enum Z_\tau\leq\Enum A_\tau$ for any bounded stopping time $\tau$ such that $Z^\tau$ and $A^\tau$ are both bounded. Then for any bounded stopping time $\tau$,
\begin{equation*}
\Enum h(Z^*_\tau)\leq \Enum\tilde h(A_\tau),
\end{equation*}
where
\begin{equation}\label{tilde}
\tilde{h}(x) = 2h(x)+x\int_{(x,\infty)}\frac{1}{y}dh(y).
\end{equation}
\end{lemma}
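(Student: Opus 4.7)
The plan is to adapt the classical Lenglart domination argument to the step-function setting by expanding $h$ as a sum of indicators supported at its jump points. The starting identity is
\begin{equation*}
h(Z^*_\tau)=\sum_{k\geq 1}\Delta h_k\,\mathbf{1}_{\set{\tau_k\leq\tau}},\qquad \Delta h_k:=h(x_k)-h(x_{k-1}),
\end{equation*}
which follows from the step nature of $h$ together with the equality of events $\set{Z^*_\tau\geq x_k}=\set{\tau_k\leq\tau}$ (this event equality uses the hypothesis $Z_{\tau_k}=x_k$ and the c\`adl\`ag monotonicity of $Z^*$). Taking expectations and exchanging sum and integral by Tonelli reduces the goal to bounding each probability $\Pnum(\tau_k\leq\tau)$ individually.

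The second step, which is the heart of the argument, is to derive for each fixed $k$ the discrete Lenglart-type bound
\begin{equation*}
x_k\Pnum(\tau_k\leq\tau)\leq \Enum[A_\tau\wedge x_k]+x_k\Pnum(A_\tau\geq x_k)
\end{equation*}
by applying the domination hypothesis at the stopping time $\rho_k:=\tau\wedge\tau_k\wedge\sigma_k$. Because $\tau$ is bounded and the assumed equalities $Z_{\tau_k}=x_k$ and $A_{\sigma_k}=x_k$ force $Z^{\rho_k}\leq x_k$ and $A^{\rho_k}\leq x_k$, the hypothesis is available and yields $\Enum Z_{\rho_k}\leq\Enum A_{\rho_k}$. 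On $\set{\tau_k\leq\tau\wedge\sigma_k}$ one has $Z_{\rho_k}=x_k$ and $A_{\rho_k}\leq A_\tau\wedge x_k$, while $\set{\tau_k\leq\tau,\sigma_k<\tau_k}\subseteq\set{A_\tau\geq x_k}$; assembling these three observations produces the displayed inequality.

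The third step is pure bookkeeping. Dividing by $x_k$, multiplying by $\Delta h_k$ and summing over $k\geq 1$, the right-hand side becomes the expectation of
\begin{equation*}
\sum_{k\geq 1}\frac{\Delta h_k}{x_k}\,(A_\tau\wedge x_k)+\sum_{k\geq 1}\Delta h_k\,\mathbf{1}_{\set{A_\tau\geq x_k}}.
\end{equation*}
The second sum equals $h(A_\tau)$ by the step-function expansion of the first step, applied to $A_\tau$ in place of $Z^*_\tau$. The first sum, split according to whether $x_k\leq A_\tau$ or $x_k>A_\tau$, equals $h(A_\tau)+A_\tau\int_{(A_\tau,\infty)}y^{-1}\,dh(y)$, so the two contributions together reproduce precisely $\tilde h(A_\tau)$ as defined in \eqref{tilde}. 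The main obstacle, and the only genuinely delicate point, is the boundedness verification needed in the second step to legitimize the use of the hypothesis at $\rho_k$; this is handled directly by the two equalities $Z_{\tau_k}=x_k$ and $A_{\sigma_k}=x_k$ built into the statement of the lemma, and the remaining manipulations are routine discrete integration-by-parts.
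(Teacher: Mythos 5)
Your argument is correct and is essentially the paper's own proof: both rest on applying the domination hypothesis at the stopping time $\tau\wedge\tau_k\wedge\sigma_k$ (whose admissibility is guaranteed by $Z_{\tau_k}=A_{\sigma_k}=x_k$), followed by a Chebyshev-type bound and discrete integration by parts over the jump points of $h$. Your per-level bound $x_k\Pnum(\tau_k\leq\tau)\leq\Enum[A_\tau\wedge x_k]+x_k\Pnum(A_\tau\geq x_k)$ coincides with the paper's $\Enum A_\tau 1_{\set{A_\tau<x_k}}+2x_k\Pnum(A_\tau\geq x_k)$ term by term, since $\Enum[A_\tau\wedge x_k]=\Enum A_\tau 1_{\set{A_\tau<x_k}}+x_k\Pnum(A_\tau\geq x_k)$.
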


\begin{proof}
For convenience, set $\Delta h_k = h(x_k)-h(x_{k-1})$ for any $k\geq 1$. Then
\begin{equation*}
\begin{split}
\Enum h(Z^*_\tau) &= \int_0^\infty\Pnum(Z^*_\tau\geq y)dh(y)
= \sum_{k=1}^\infty\Pnum(Z^*_\tau\geq x_k)\Delta h_k\\
&\leq \sum_{k=1}^\infty\left[\Pnum(Z^*_\tau\geq x_k,A_\tau<x_k)+\Pnum(A_\tau\geq x_k)\right]\Delta h_k.
\end{split}
\end{equation*}
Since $Z_{\tau_k} = A_{\sigma_k} = x_k$, it is easy to see that $\tau\wedge\tau_k\wedge\sigma_k$ is a bounded stopping time such that $Z^{\tau\wedge\tau_k\wedge\sigma_k}$ and $A^{\tau\wedge\tau_k\wedge\sigma_k}$ are both bounded. This suggests that
\begin{equation*}
\begin{split}
\Pnum(Z^*_\tau\geq x_k,A_\tau<x_k) &= \Pnum(\tau_k\leq\tau<\sigma_k)
\leq \Pnum(Z_{\tau\wedge\tau_k\wedge\sigma_k}\geq x_k)\\
&\leq \frac{1}{x_k}\Enum Z_{\tau\wedge\tau_k\wedge\sigma_k}
\leq \frac{1}{x_k}\Enum A_{\tau\wedge\tau_k\wedge\sigma_k}\\
&\leq \frac{1}{x_k}\Enum A_{\tau}I_{\set{A_\tau<x_k}}+\Pnum(A_\tau\geq x_k).
\end{split}
\end{equation*}
Thus we have
\begin{equation*}
\begin{split}
\Enum h(Z^*_\tau) &\leq \sum_{k=1}^\infty\left[\frac{1}{x_k}\Enum A_{\tau}1_{\set{A_\tau<x_k}}+2\Pnum(A_\tau\geq x_k)\right]\Delta h_k\\
&= \int_0^\infty\left[\frac{1}{y}\Enum A_{\tau}1_{\set{A_\tau<y}}+2\Pnum(A_\tau\geq y)\right]dh(y)
= \Enum\tilde h(A_\tau),
\end{split}
\end{equation*}
which gives the desired result.
\end{proof}

The following lemma plays a key role in the study of the maximal inequalities.
\begin{lemma}\label{martingale}
$\Enum f(X_\tau) = \Enum\tau$ for any bounded stopping time $\tau$ such that $X^\tau$ is bounded.
\end{lemma}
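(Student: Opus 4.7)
The plan is to identify $f$ as the solution to the Poisson equation $Qf \equiv 1$ on $\Znum^+$, and then invoke Dynkin's martingale together with optional stopping. More precisely, I will first show that
\begin{equation*}
(Qf)(n) = \sum_{j\neq n} q_{nj}(f_j - f_n) = 1 \quad\text{for every } n\geq 0,
\end{equation*}
so that $M_t := f(X_t) - t$ is a local martingale. Under the assumption that $X^\tau$ takes values in some bounded set $\{0,1,\dots,N\}$ and $\tau\leq T$ for some constants $N,T<\infty$, the stopped process $M^\tau$ is bounded by $f_N + T$, and hence is a genuine martingale. Optional stopping then gives $\Enum M_\tau = M_0 = 0$, i.e.\ $\Enum f(X_\tau) = \Enum\tau$.

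The core of the argument is the algebraic verification of $(Qf)(n)=1$, which I expect to be the main (and essentially only non-routine) step. Since $f_n - f_j = \sum_{k=j}^{n-1} m_k$ for $j\leq n$, and $X$ is upward skip-free,
\begin{equation*}
(Qf)(n) = \lambda_n(f_{n+1}-f_n) - \sum_{j=0}^{n-1}q_{nj}(f_n - f_j) = \lambda_n m_n - \sum_{k=0}^{n-1} m_k\sum_{j=0}^{k}q_{nj},
\end{equation*}
after swapping the order of summation in $j$ and $k$. On the other hand, separating the $i=n$ term in the definition of $m_n$ and using the recursion $\lambda_n F_{in} = \sum_{k=i}^{n-1} F_{ik}\sum_{j=0}^{k}q_{nj}$ gives
\begin{equation*}
\lambda_n m_n = F_{nn} + \sum_{i=0}^{n-1}\frac{1}{\lambda_i}\sum_{k=i}^{n-1}F_{ik}\sum_{j=0}^{k}q_{nj} = 1 + \sum_{k=0}^{n-1}\Bigl(\sum_{j=0}^{k}q_{nj}\Bigr)\sum_{i=0}^{k}\frac{F_{ik}}{\lambda_i} = 1 + \sum_{k=0}^{n-1} m_k\sum_{j=0}^{k}q_{nj},
\end{equation*}
again by swapping the order of the double sum in $i,k$. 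Subtracting produces $(Qf)(n)=1$, as desired.

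Once this identity is in hand, the rest is standard. Dynkin's formula applied to the bounded function $f\cdot \mathbf{1}_{\{0,\dots,N\}}$ (which agrees with $f$ on the range of $X^\tau$) shows that $f(X_{t\wedge\tau}) - (t\wedge\tau)$ is a martingale; since it is uniformly bounded by the hypothesis that both $X^\tau$ and $\tau$ are bounded, the optional stopping theorem applies at the bounded stopping time $\tau$, yielding $\Enum f(X_\tau) = f(X_0) + \Enum\tau = \Enum\tau$ because $X_0=0$ and $f_0=0$. No further assumption on $\tau$ is needed beyond what is already assumed in the statement.
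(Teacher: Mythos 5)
Your proposal follows essentially the same route as the paper: identify $f$ as the solution of the Poisson equation $Qf=1$, $f_0=0$, and combine the resulting Dynkin martingale with optional stopping at the bounded stopping time $\tau$; your index-swapping verification of $(Qf)(n)=1$ is correct and supplies explicitly the computation the paper outsources to a citation of Chen and Zhang. The one imprecision is the localization step: truncating to $h=f\,1_{\{0,\dots,N\}}$ gives $(Qh)(N)=1-\lambda_N f_{N+1}\neq 1$, because the generator at state $N$ sees the value of $h$ at $N+1$ even though the stopped path never reaches it, so the claimed martingale identity does not follow from Dynkin applied to that particular truncation. The fix is immediate and is what the paper does: stop the chain at the first hitting time of a level above $N$ (making that level absorbing), so that one works with a finite-state chain on which the Dynkin process is a genuine martingale, and then pass to the limit.
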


\begin{proof}
It has been proved in \cite{chen2014unified} that the function $f$ is the unique solution to the Poisson equation
\begin{equation*}
Qf = 1,\;\;\;f_0 = 0.
\end{equation*}
For any $n\geq 0$, let $\tau_n  = \inf\set{t\geq 0:X_t\geq n}$. Then $X^{\tau_n}$ is an upward skip-free Markov chain with absorbing state $n$, whose generator matrix is denoted by $Q_n$. Since $X^{\tau_n}$ is a Markov chain with finite state space, the process
\begin{equation*}
f(X^{\tau_n}_t)-f(X^{\tau_n}_0)-\int_0^tQ_nf(X^{\tau_n}_s)ds
\end{equation*}
is a martingale. It is easy to check that $Q_nf(X^{\tau_n}_s) = Qf(X_s)1_{\set{s<\tau_n}} = 1_{\set{s<\tau_n}}$. This shows that the process $f(X_{\tau_n\wedge t})-\tau_n\wedge t$ is a martingale. Thus for any bounded stopping time $\tau$ such that $X^\tau$ is bounded,
\begin{equation*}
\Enum f(X_{\tau_n\wedge\tau}) = \Enum\tau_n\wedge\tau.
\end{equation*}
By taking $n\rightarrow\infty$, the desired result follows from the dominated and monotonic convergence theorems.
\end{proof}

We are now in a position to prove Theorem \ref{thm1}.
\begin{proof}[Proof of Theorem \ref{thm1}]
Without loss of generality, we assume that $\tau$ is bounded. For any $p>0$, let $h_p$ be an increasing step function on $\Rnum^+$ defined as
\begin{equation*}
h_p(x) = n^p,\;\;\;\textrm{if}\;f_n\leq x<f_{n+1}\;\textrm{for some $n\geq 0$}.
\end{equation*}
Then $h_p(f_n) = n^p$ for any $n\geq 0$. In fact, the Peskir condition implies that there exists $C_p>0$ such that for any $n\geq 0$,
\begin{equation*}
f_n\sum_{k=n+1}^\infty\frac{h_p(f_k)-h_p(f_{k-1})}{f_k}\leq C_ph_p(f_n).
\end{equation*}
This shows that for any $n\geq 0$,
\begin{equation*}
\tilde h_p(f_n)\leq (C_p+2)h_p(f_n) = (C_p+2)n^p,
\end{equation*}
where $\tilde h_p$ is defined from $h_p$ in the same way that $\tilde{h}$ is defined from $h$ as in \eqref{tilde}. On one hand, if we choose $Z_t = f(X_t)$ and $A_t = t$, it is easy to check that all the assumptions in Lemma \ref{langlart} are satisfied. Thus we have
\begin{equation*}
\Enum (X^*_\tau)^p = \Enum h_p(f(X^*_\tau)) \leq \Enum\tilde h_p(\tau) \leq \Enum\tilde h_p(f(\rintg)) \leq (C_p+2)\Enum\rintg^p.
\end{equation*}
On the other hand, if we choose $Z_t = t$ and $A_t = f(X^*_t)$, it is also easy to check that all the assumptions in Lemma \ref{langlart} are satisfied. Thus we have
\begin{equation*}
\Enum\lintg^p = \Enum h_p(f(\lintg)) \leq \Enum h_p(\tau) \leq \Enum\tilde h_p(f(X^*_\tau)) \leq (C_p+2)\Enum(X^*_\tau)^p.
\end{equation*}
The above two inequalities give the desired result.
\end{proof}

\section{Proofs of Theorems \ref{thm2} and \ref{thm3}}\label{proof2}
We start with a simple fact.
\begin{lemma}\label{simple}
Let $F$ be a continuous increasing function on $\Rnum^+$ with $F(0) = 0$ and let $X$ be a nonnegative random variable. Then for any $\beta>0$,
\begin{equation*}
\int_0^\infty\Pnum(X\geq\lint{\beta x})dF(x) = \Enum F((\lint X+1)/\beta).
\end{equation*}
\end{lemma}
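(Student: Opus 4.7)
The plan is to rewrite the integrand $\Pnum(X\geq\lint{\beta x})$ as the tail probability of a single random variable evaluated at $x$, and then apply the standard layer-cake formula.

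First I would observe that for any nonnegative real $y$ and any nonnegative integer $n$ one has $y\geq n\iff \lint{y}\geq n$. Applying this with $n=\lint{\beta x}$ (which is a nonnegative integer since $\beta x\geq 0$) yields
\begin{equation*}
\Pnum(X\geq\lint{\beta x}) = \Pnum(\lint{X}\geq\lint{\beta x}).
\end{equation*}

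Next I would translate the right-hand event into one of the form $\{Y>x\}$. Since $\lint{X}+1$ and $\lint{\beta x}$ are both integers, $\lint{X}\geq\lint{\beta x}$ is equivalent to $\lint{X}+1>\lint{\beta x}$; the latter is in turn equivalent to $\lint{X}+1>\beta x$ because $\lint{X}+1$ is an integer strictly greater than $\lint{\beta x}$ iff it is strictly greater than $\beta x$ itself (one direction uses $\lint{X}+1\geq\lint{\beta x}+1>\beta x$, the other uses $\lint{X}+1>\beta x\geq\lint{\beta x}$). Dividing by $\beta>0$ gives
\begin{equation*}
\Pnum(X\geq\lint{\beta x}) = \Pnum\bigl((\lint{X}+1)/\beta>x\bigr).
\end{equation*}

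Finally I would conclude by Fubini applied to the nonnegative random variable $Y=(\lint{X}+1)/\beta$:
\begin{equation*}
\int_0^\infty\Pnum(Y>x)\,dF(x) = \Enum\!\int_0^\infty\mathbf{1}_{\{x<Y\}}\,dF(x) = \Enum[F(Y)-F(0)] = \Enum F(Y),
\end{equation*}
where $F(0)=0$ is used in the last step. (Continuity of $F$ renders the distinction between $\Pnum(Y>x)$ and $\Pnum(Y\geq x)$ immaterial under $dF$, but it is not even needed here since we obtained strict inequality directly.)

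I do not anticipate any real obstacle; the only delicate point is the integer/real inequality juggling in the second step, which must be written out carefully to get exactly $(\lint{X}+1)/\beta$ rather than $\lint{X}/\beta$ or $X/\beta$ on the right-hand side.
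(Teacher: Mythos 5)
Your proof is correct and rests on the same engine as the paper's: both are one-line Fubini/layer-cake arguments, the paper discretizing the integral into $\sum_k\Pnum(X\geq k)[F((k+1)/\beta)-F(k/\beta)]$ and telescoping, while you instead absorb the integer juggling into the identity $\{X\geq\lint{\beta x}\}=\{(\lint{X}+1)/\beta>x\}$ and then apply the standard tail formula. The integer manipulations you flag as delicate are all carried out correctly, so there is nothing to fix.
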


\begin{proof}
By the Fubini theorem, we have
\begin{equation*}
\begin{split}
\int_0^\infty\Pnum(X\geq\lint{\beta x})dF(x)
&= \sum_{k=0}^\infty\Pnum(X\geq k)[F((k+1)/\beta)-F(k/\beta)]\\
&= \Enum\sum_{k=0}^{\lint X}[F((k+1)/\beta)-F(k/\beta)] = \Enum F((\lint X+1)/\beta),
\end{split}
\end{equation*}
which gives the desired result.
\end{proof}

To proceed, we need the discrete version of the good $\lambda$ inequality which is stated below (see \cite[Page 164]{revuz1999continuous} for the continuous version).
\begin{lemma}\label{goodlambda}
Let $X$ and $Y$ be two nonnegative random variables. Let $\phi:\Rnum^+\rightarrow\Rnum^+$ be a function satisfying ${\phi(\delta)\rightarrow 0}$ as ${\delta\rightarrow 0}$. Assume that there exists $\beta>1$ such that the following good $\lambda$ inequality holds for any $\delta>0$ and sufficiently large integer $k$:
\begin{equation}\label{assumption}
\Pnum(X\geq\lint{\beta k},Y<\lint{\delta k}) \leq \phi(\delta)\Pnum(X\geq k).
\end{equation}
Then for any moderate function $F$, there exists $C>0$ depending on $\phi$, $\beta$, and $F$ such that
\begin{equation*}
\Enum F(\lint X+1)\leq C\Enum F(\lint Y+1).
\end{equation*}
\end{lemma}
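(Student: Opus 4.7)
The plan is to adapt the classical continuous good-$\lambda$ argument to this discrete setting, using Lemma \ref{simple} as the bridge between integrals of the form $\int \Pnum(\cdot \geq \lint{\beta x}) dF(x)$ and expectations of $F((\lint\cdot + 1)/\beta)$. The moderateness of $F$ will be invoked repeatedly: I fix $K>0$ with $F(\beta y) \leq K F(y)$ for all $y>0$, and by iteration produce, for each $\delta \in (0,1]$, a constant $K'_\delta$ with $F(2y/\delta) \leq K'_\delta F(y)$ for all $y>0$.

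For the main estimate, I would first use the pointwise bound $F(\lint X + 1) \leq K F((\lint X + 1)/\beta)$; by Lemma \ref{simple} and the monotonicity of $\lint{\beta x}$ on each interval $[n,n+1)$,
\[
\Enum F(\lint X + 1) \leq K \int_0^\infty \Pnum(X \geq \lint{\beta x}) dF(x) \leq K \sum_{n=0}^\infty \Pnum(X \geq \lint{\beta n}) [F(n+1) - F(n)].
\]
I split this sum at the threshold $k_0 = k_0(\delta)$ from the hypothesis: for $n\geq k_0$, apply
\[
\Pnum(X \geq \lint{\beta n}) \leq \Pnum(X \geq \lint{\beta n}, Y < \lint{\delta n}) + \Pnum(Y \geq \lint{\delta n}) \leq \phi(\delta) \Pnum(X \geq n) + \Pnum(Y \geq \lint{\delta n}),
\]
while for $n<k_0$ the trivial bound $\leq 1$ contributes a residual of size $F(k_0)$. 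The $X$-sum reassembles by Fubini into $\Enum F(\lint X + 1) = \sum_{n\geq 0}\Pnum(X\geq n)[F(n+1)-F(n)]$. For the $Y$-sum, the key observation is that $\{Y\geq \lint{\delta n}\}$ is equivalent to $n<(\lint Y + 1)/\delta$, yielding the telescoping identity
\[
\sum_{n=0}^\infty \Pnum(Y \geq \lint{\delta n})[F(n+1)-F(n)] = \Enum F\bigl(\rint{(\lint Y + 1)/\delta}\bigr) \leq K'_\delta \Enum F(\lint Y + 1),
\]
where the final inequality uses $\rint{z}\leq 2z$ for $z\geq 1$ together with moderateness. Assembling the three pieces,
\[
\Enum F(\lint X + 1) \leq K F(k_0) + K\phi(\delta)\Enum F(\lint X + 1) + K K'_\delta\Enum F(\lint Y + 1).
\]
I choose $\delta$ small enough that $K\phi(\delta)\leq 1/2$, and use $\Enum F(\lint Y + 1)\geq F(1)>0$ (which lets me absorb $K F(k_0)$ into a constant multiple of $\Enum F(\lint Y + 1)$) to move $K\phi(\delta)\Enum F(\lint X + 1)$ to the left, concluding $\Enum F(\lint X + 1)\leq C\Enum F(\lint Y + 1)$ for some $C$ depending only on $\phi$, $\beta$, and $F$.

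The main obstacle is that the absorption step requires $\Enum F(\lint X + 1)<\infty$ a priori, which is not assumed. I would handle this by a standard truncation $X_N := X\wedge N$: the hypothesis \eqref{assumption} passes to $X_N$ with the same $\phi$, since for $k\leq N$ we have $\{X_N\geq k\}=\{X\geq k\}$ and $\{X_N\geq \lint{\beta k}\}\subseteq\{X\geq \lint{\beta k}\}$, while for $k>N$ we have $\lint{\beta k}\geq N+1$, so $\{X_N\geq\lint{\beta k}\}=\emptyset$ and the inequality is trivially $0\leq 0$. Applying the argument above to $X_N$ yields the finite bound $\Enum F(\lint{X_N}+1)\leq C\Enum F(\lint Y+1)$, and letting $N\to\infty$ by monotone convergence finishes the proof.
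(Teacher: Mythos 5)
Your proof is correct and follows essentially the same route as the paper's: the good-$\lambda$ split $\Pnum(X\geq\lint{\beta k})\leq\Pnum(X\geq\lint{\beta k},Y<\lint{\delta k})+\Pnum(Y\geq\lint{\delta k})$, combined with Lemma \ref{simple} and the moderateness of $F$ to absorb the $\phi(\delta)\Enum F(\lint X+1)$ term. The only divergences are technical: the paper first extends \eqref{assumption} from integers $k$ to all real $x$ (at the cost of replacing $\phi(\delta)$ by $\phi(2\delta)$, and making the small-$x$ case vacuous by taking $\delta<1/M$) and secures the a priori finiteness needed for absorption by assuming $F$ bounded, whereas you stay with integer sums (absorbing the small-$n$ residual $F(k_0)$ via $\Enum F(\lint Y+1)\geq F(1)>0$) and secure finiteness by truncating $X$ --- both devices are sound and yield constants independent of the truncation level.
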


\begin{proof}
Assume that \eqref{assumption} holds for any $\delta>0$ and $k\geq M$, where $M$ is a sufficiently large integer. We first prove that for any $0<\delta<1/M$ and $x>0$,
\begin{equation}\label{inequality}
\Pnum(X\geq\lint{\beta x},Y<\lint{\delta x}) \leq \phi(2\delta)\Pnum(X\geq\lint x).
\end{equation}
In order to prove this fact, we consider two different cases. If $0<x<M$, then $\lint{\delta x} = 0$. In this case, \eqref{inequality} holds trivially because its left side is zero. If $x\geq M$, there exists an integer $k\geq M$ such that $k\leq x<k+1$. In this case, we have $\lint{\beta x}\geq\lint{\beta k}$ and $\lint{\delta x}\leq\lint{\delta(k+1)}\leq \lint{2\delta k}$, which give rise to
\begin{equation*}
\begin{split}
\Pnum(X\geq\lint{\beta x},Y<\lint{\delta x})
&\leq \Pnum(X\geq\lint{\beta k},Y<\lint{2\delta k})\\
&\leq \phi(2\delta)\Pnum(X\geq k) = \phi(2\delta)\Pnum(X\geq\lint x).
\end{split}
\end{equation*}
Without loss of generality, we assume that $F$ is bounded. By Lemma \ref{simple}, for any $0<\delta<1/M$,
\begin{equation*}
\begin{split}
\Enum F((\lint X+1)/\beta)
&\leq \int_0^\infty\left[\Pnum(X\geq\lint{\beta x},Y<\lint{\delta x})+\Pnum(Y\geq\lint{\delta x})\right]dF(x)\\
&\leq \int_0^\infty\left[\phi(2\delta)\Pnum(X\geq\lint x)+\Pnum(Y\geq\lint{\delta x})\right]dF(x)\\
&\leq \phi(2\delta)\Enum F(\lint X+1)+\Enum F((\lint Y+1)/\delta).
\end{split}
\end{equation*}
Since $F$ is a moderate function and $\beta>1$, there exists $c>0$ such that $F(x/\beta)\geq cF(x)$ for any $x\geq 0$. This suggests that
\begin{equation*}
(c-\phi(2\delta))\Enum F(\lint X+1) \leq \Enum F((\lint Y+1)/\delta).
\end{equation*}
Since ${\phi(\delta)\rightarrow 0}$ as ${\delta\rightarrow 0}$, we can choose $0<\delta<1/M$ such that $\phi(2\delta)<c$. Since $F$ is a moderate function, there exists $C>0$ such that $F(x/\delta)\leq CF(x)$ for any $x\geq 0$. Thus we have
\begin{equation*}
\Enum F(\lint X+1) \leq \frac{C}{c-\phi(2\delta)}\Enum F(\lint Y+1),
\end{equation*}
which gives the desired result.
\end{proof}

We are now in a position to prove Theorem \ref{thm2}.
\begin{proof}[Proof of Theorem \ref{thm2}]
Since $X$ is controllable, there exist $\beta>1$ and $C,\gamma>0$ such that for any $t\geq 0$ and sufficiently large integer $k$,
\begin{equation*}
\Pnum_k(X^*_t\geq\lint{\beta k}) \leq C\Pnum_0(X^*_t\geq\lint{\gamma k}).
\end{equation*}
Let $\tau_k = \inf\set{t\geq 0:X_t\geq k}$. For any $\delta>0$ and integer $k\geq 0$, it is easy to see that
\begin{equation}\label{aim1}
\Pnum(X^*_\tau\geq\lint{\beta k},g(\tau)<\lint{\delta k})
\leq \Pnum(X^*_{s\vee\tau_k}\geq\lint{\beta k},\tau\geq\tau_k),
\end{equation}
where $s = f(\lint{\delta k})$. By the strong Markov property of $X$, we have
\begin{equation*}
\begin{split}
\Pnum(X^*_\tau\geq\lint{\beta k},g(\tau)<\lint{\delta k})
&\leq \Enum 1_{\{\tau\geq\tau_k\}}\Pnum(X^*_{s\vee\tau_ k}\geq\lint{\beta k}|\mathscr{F}_{\tau_k})\\
&\leq \Enum 1_{\{\tau\geq\tau_k\}}\Pnum_{X_{\tau_k}}(X^*_{s\vee\tau_k-\tau_k}\geq\lint{\beta k})\\
&\leq \Pnum_k(X^*_s\geq\lint{\beta k})\Pnum(X_\tau^*\geq k).
\end{split}
\end{equation*}
By Theorem \ref{thm1}, there exists $M\geq 1$ such that for any integer $k\geq M$,
\begin{equation*}
\Pnum_k(X^*_s\geq\lint{\beta k}) \lesssim \Pnum_0(X^*_s\geq\lint{\gamma k})
\leq \frac{\Enum(X^*_s)^p}{\lint{\gamma k}^p}
\lesssim \frac{\lint{\delta k}^p}{\lint{\gamma k}^p}
\leq \frac{(\delta k)^p}{(\gamma k-1)^p} \lesssim \delta^p.
\end{equation*}
Thus there exists $c_p>0$ such that for any $\delta>0$ and integer $k\geq M$,
\begin{equation*}
\Pnum(X^*_\tau\geq\lint{\beta k},g(\tau)<\lint{\delta k}) \leq c_p\delta^p\Pnum(X^*_\tau\geq k).
\end{equation*}
By Lemma \ref{goodlambda}, there exists $C_F>0$ such that
\begin{equation*}
\Enum F(X^*_\tau+1) \leq C_F\Enum F(\lint{g(\tau)}+1) \leq C_F\Enum F(g(\tau)+1),
\end{equation*}
which gives the desired result.
\end{proof}

We are now in a position to prove Theorem \ref{thm3}.
\begin{proof}[Proof of Theorem \ref{thm3}]
For any $\delta>0$ and integer $k\geq 0$, it is easy to see that
\begin{equation}\label{aim2}
\Pnum(g(\tau)\geq\lint{\beta k},X^*_\tau<\lint{\delta k})
\leq \Pnum(\tau\geq r,X^*_s<\lint{\delta k}),
\end{equation}
where $r = f(k)$ and $s = f(\lint{\beta k})$. By the Markov property of $X$, we have
\begin{equation*}
\begin{split}
\Pnum(g(\tau)\geq\lint{\beta k},X^*_\tau<\lint{\delta k})
&\leq \Enum 1_{\{\tau\geq r\}}\Pnum(X^*_s<\lint{\delta k}|\mathscr{F}_r)
\leq \Enum 1_{\{\tau\geq r\}}\Pnum_{X_r}(X^*_{s-r}<\lint{\delta k})\\
&\leq \Pnum(X^*_{s-r}<\lint{\delta k})\Pnum(g(\tau)\geq k).
\end{split}
\end{equation*}
Let $\tau_{\delta k} = \inf\set{t\geq 0:X_t\geq\lint{\delta k}}$. It follows from Lemma \ref{martingale} that $f(X_{\tau_{\delta k}\wedge t})-\tau_{\delta k}\wedge t$ is a martingale. This shows that for any $t\geq 0$,
\begin{equation*}
\Enum \tau_{\delta k}\wedge t = \Enum f(X_{\tau_{\delta k}\wedge t})\leq f(\lint{\delta k}).
\end{equation*}
Thus we have
\begin{equation*}
\begin{split}
\Pnum(X^*_{s-r}<\lint{\delta k}) = \Pnum(\tau_{\delta k}>s-r)
\leq \frac{1}{s-r}\Enum\tau_{\delta k}
\leq \frac{f(\lint{\delta k})}{f(\lint{\beta k})-f(k)}.
\end{split}
\end{equation*}
Thus for any $\delta>0$ and integer $k\geq M$,
\begin{equation*}
\Pnum(g(\tau)\geq\lint{\beta k},X^*_\tau<\lint{\delta k})
\leq \sup_{k\geq M}\frac{f(\lint{\delta k})}{f(\lint{\beta k})-f(k)}\Pnum(g(\tau)\geq k).
\end{equation*}
Thus Lemma \ref{goodlambda}, together with \eqref{jia}, shows that there exists $k_F>0$ such that
\begin{equation*}
\Enum F(X^*_\tau+1) \geq k_F\Enum F(\lint{g(\tau)}+1).
\end{equation*}
It is easy to see that $x+1 \leq 2(\lint x+1)$ for any $x\geq 0$. Since $F$ is a moderate function, there exists $c>0$ such that $F(x/2)\geq cF(x)$ for any $x\geq 0$. Thus we finally obtain that
\begin{equation*}
\Enum F(X^*_\tau+1) \geq k_F\Enum F((g(\tau)+1)/2)\geq ck_F\Enum F(g(\tau)+1),
\end{equation*}
which gives the desired result.
\end{proof}

\section*{Acknowledgements}
I am grateful to G.-H. Zhao, X.-F. Xue, and X. Chen for stimulating discussions and grateful to the anonymous reviewers for their valuable suggestions which help me greatly in improving the quality of this paper.

\setlength{\bibsep}{8pt}
\small\bibliographystyle{nature}

\end{document}